\newtheorem{lemma}{Lemma}[section]
\newtheorem{theorem}[lemma]{Theorem}
\newtheorem{rem}[lemma]{Remark}
\def\R{{\relax\ifmmode I\!\!R\else$I\!\!R$\fi}}
\newcommand{\beqn}{\begin{equation}}
\newcommand{\eeqn}{\end{equation}}
\newcommand\eref[1]{(\ref{#1})}
\newcommand{\e}{\epsilon}
\newcommand{\al}{\alpha}
\newcommand{\be}{\beta}
\newcommand{\sgn}{\mathrm{sgn}}
\newcommand{\argmin}{\mathrm{argmin}}
\newcommand{\argmax}{\mathrm{argmax}}
\newcommand{\ran}{\rangle}
\newcommand{\lan}{\langle}
\newcommand{\abs}[1]{\lvert#1\rvert}
\providecommand{\norm}[1]{\lVert#1\rVert}
\title{Greedy Strategies for Convex Optimization}
\author{ 
Hao Nguyen and   Guergana Petrova}
\thanks{%
    This research was supported by the Office of Naval Research Contract    ONR N00014-11-1-0712, by  the NSF Grant   DMS 1222715,  and by the Bulgarian Science Fund Grant DFNI-T01/0001.  }
\begin{document}
\maketitle
\begin{abstract}
We investigate two greedy strategies for finding an approximation to the
minimum  of a convex function $E$ defined on a Hilbert space $H$. We prove convergence rates for these
algorithms under suitable conditions on the objective function $E$. These conditions involve the behavior of the modulus of 
smoothness and the modulus of uniform convexity of $E$.

\noindent
\noindent
{\bf Key Words:} 
Greedy Algorithms, Convex Optimization, Rates of Convergence.\\

 \end{abstract}

\section{Introduction}
\label{Intr}
Convex optimization has  many application domains  such as automatic control systems,  signal processing, communications and networks, electronic circuit design, data analysis and modeling, statistical estimation, finance, and combinatorial optimization.   
A general description for  convex optimization is that we are given a Banach space $X$ and a convex function $E$ on $X$ whose minimum we wish to compute.      Thus, we are interested in the  development and analysis of  algorithms  for approximating 
\begin{equation}
\label{Eq}
  \inf_{x\in D} E(x),
\end{equation}
where   $D$ is a convex subset of $X$.  $E$ is called the {\it objective} function and,  by the convexity assumption, satisfies the   condition
$$
E(\gamma x+\delta y)\leq \gamma E(x)+\delta E(y), \quad x,y\in D, \quad \gamma, \delta\geq 0, \quad \gamma+\delta=1.
$$
The classical results on convex optimization deal with objective functions $E$ defined on subsets in $\R^d$  with moderate values of $d$, see e.g. \cite{BV}.    However, several of the 
 applications,  listed above, lead to optimization on   Banach spaces   of dimension $d$,  where $d$ is quite large or even $\infty$.  The design of algorithms for such high dimensional problems is quite challenging, typical
convergent results involve  the dimension $d$ and suffer from the curse of dimensionality. 

Recently,  several researchers (see e.g. \cite{Temlyakov1,Temlyakov2, Z}), have proposed 
   strategies for solving (\ref{Eq}), where   the curse of dimensionality is overcome by using  greedy techniques, similar to those originally
developed for the approximation of a given element $x\in X$.
The minimum in \eref{Eq} is approximated by $E(x_m)$, $m=0,1,\dots$, where each $x_m$    is constructed as a linear combination of $m$ elements (i.e. $x_m$ is $m$ {\it sparse}) from a given  dictionary ${\mathcal D}$.  Recall that ${\mathcal D}$ is called a symmetric dictionary if each $\varphi \in {\mathcal D}$ has norm $\|\varphi\|\leq 1$, if $\varphi \in {\mathcal D}$, then $-\varphi\in {\mathcal D}$, and the closure of $span\, {\mathcal D}$ is $X$.  
A typical a priori convergence result given by the above authors for these greedy algorithms is  proven under two assumptions:
\vskip .1in
\noindent
{ (i)}  An assumption  on the smoothness of $E$.
\vskip .1in
\noindent
{ (ii)} An assumption that the minimum in \eref{Eq} is taken at a point $\bar x$ which  is in the convex hull of the dictionary ${\mathcal D}$.
\vskip .1in

In this paper, we investigate  the special case when $X=H$ is a Hilbert space, the dictionary ${\mathcal D}=\{\pm \varphi_j\}_{j=1}^\infty $,  where $\{\pm \varphi_j\}_{j=1}^\infty $is an orthonormal basis,  and $D=H$ (which corresponds to  global minimization).   We assume that the global minimum is attained at some point $\bar x\in H$.  It follows then that the minimum is taken on  the set 
$$\Omega:=\{x\in H:\,\,E(x)\leq E(0)\}.$$ 
We assume throughout this paper that the set $\Omega$ is bounded in $H$.
We impose the following assumptions on the  objective  function  $E$: 

\vskip .1in
\noindent
{\bf Condition 0:}
 $E$   has a  Frechet derivative $E'(x)\in H$ at each point $x$ in $\Omega$ and  
  $$\|E'(x)\|\le M_0,\quad x\in \Omega,$$
  where throughout $\|\cdot\|$ denotes the norm on $H$.
 
\vskip .1in
\noindent
{\bf Condition 1:} There are constants $0< \al$, $1<q\leq 2$ and $0<M$, such that 
for all $x$, $x'$ with $\|x-x'\|\leq M$, $x\in \Omega$,
\begin{equation}
\label {cond1}
 E(x')-E(x)-\langle E' (x),x'-x\rangle \leq \al \|x'-x\|^q.
\end{equation}

\vskip .1in
\noindent
{\bf Condition 2:}
There are constants $0< \be$, $2\leq p<\infty$ and $0<M$, such that for all $x$, $x'$ with $\|x-x'\|\leq M$, $x\in \Omega$,
\begin{equation}
\label {cond2}
 E(x')-E(x)-\langle E' (x),x'-x\rangle \geq \be \|x'-x\|^p.
\end{equation}
 \vskip .1in
 We show in \S\ref{cond} that {\bf Condition 1}  is equivalent to conditions on the modulus of smoothness $\rho(E,u)$, and  {\bf Condition 2}  is equivalent to conditions on the modulus of uniform convexity $\delta_1(E,u)$, as  usually defined
 in convex optimization (see e.g. \cite{Za}), and introduced by us in \S\ref{cond}.
 
 We study two greedy procedures for solving \eref{Eq}.  The first is  the analogue for convex minimization of the Orthogonal Matching Pursuit Algorithm
used  for approximation (see \cite{MZ}).    We  denote this convex minimization algorithm by OMP(co)\footnote{Here and later we will use the abbreviation (co) if an algorithm is used for convex optimization}.     The second is the  Weak Chebyshev Greedy Algorithm (WCGA(co)) as introduced by Temlyakov \cite{Temlyakov2}.  These greedy procedures, which are defined in \S \ref{greedy}, iteratively generate a sequence $x_m$, $m=0,1,\dots$,
where each $x_m$ is $m$ sparse, and then use $E(x_m)$ as the approximation to the minimum $E(\bar x)$.

 Our main results are Theorem \ref{Tm0} and  Theorem \ref{wlm0}  which establish a priori convergence rates for both  OMP(co) and the  WCGA(co) when  they are used to find the minimum of a function $E$ that satisfies {\bf Conditions 0},  {\bf 1} and  {\bf 2}. For example, we show that if  the objective function $E$ satisfies {\bf Condition 0} and {\bf Condition  1}, is strongly convex on $H$ (therefore satisfies {\bf Condition 2} with $p=2$), and its minimizer  $\bar x$ is sparse with respect to ${\mathcal D}$, then the error at the $m$-th step of the OMP(co)  satisfies the inequality
$$
E(x_m)-E(\bar x)\leq C_0m^{1-\frac{q}{2-q}}, \quad 1<q<2,
$$
and 
$$
\|x_m-\bar x\|\leq C_1m^{\frac{1}{2}-\frac{q}{2(2-q)}},
$$
where $C_0=C_0(q,E)$ and $C_1=C_1(q,E)$.   We also prove exponential convergence  in the case   $q=2$.   
  In contrast, the results from \cite{Temlyakov2} and \cite{Z} do not impose {\bf Condition 2} and   only give the rate $1-q$ . In summary, we show that imposing more conditions on the convexity of  the objective function $E$ (like {\bf Condition 2}) results in provably improved convergence rates for both OMP(co)  and WCGA(co).

 \section{Conditions on $E$}
 \label{cond}

 In this section, we discuss the compatibility of the conditions ({\bf Condition 0} , {\bf Condition 1} and {\bf Condition 2}) imposed on the objective function $E$
 and their relation to the modulus of smoothness and modulus of uniform convexity of $E$.
  We  recall that a function $E$ is Frechet differentiable at $x\in \Omega$ if
  there exists a bounded linear functional, denoted by $E'(x)$,  such that
$$
\lim_{h \to 0} \frac {|E( x+h)-E( x)-\lan E'(x),h\ran|}{\|h\|} =0.
$$

 We start with  discussing the connection between {\bf Condition 1} and the  modulus of uniform smoothness of $E$ on $\Omega$.

\subsection{Condition 1}

\noindent

 Given a convex function $E: H \to \R$ and a set $S \subset H$, the modulus of smoothness of $E$ on $S$ is defined by
 \begin{equation}
 \label{mos}
  \rho(E,u) :=\rho(E,u,S):=\frac 12 \sup_{ x\in S,\|y\|=1}\left\{E(x+uy)+E(x-uy)-2E(x)\right\} ,\quad u>0,
 \end{equation}
 and  the modulus of uniform smoothness of $E$ on $S$ is defined by
\begin{equation}
 \label{mus}
\rho_{1}(E,u,S):=\sup_{x\in S, \|y\|=1, \lambda \in (0,1)}\left\{\frac{(1-\lambda)E(x-\lambda uy)+\lambda E(x+(1-\lambda) uy)-E(x)}{\lambda(1-\lambda)}\right\}.
 \end{equation}

These  two moduli of smoothness  are equivalent (see  \cite{Za}, page 205):
\begin{lemma}
\label{equimodu}
Let $E$ be a convex function defined on $H$, and let $S\subset H$, then
\begin{equation}
4\rho( E,\frac u 2, S) \leq \rho_{1}(E,u,S) \leq 2 \rho(E,u,S).
\end{equation}
\end{lemma}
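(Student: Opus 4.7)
The plan is to establish the two inequalities separately, each following from convexity of $E$ together with a well-chosen specialization of the free parameter $\lambda$.

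For the lower bound $4\rho(E,u/2,S)\leq \rho_{1}(E,u,S)$, I would simply specialize $\lambda=1/2$ in the definition \eref{mus} of $\rho_1$. The denominator $\lambda(1-\lambda)$ collapses to $1/4$, the numerator becomes $\tfrac{1}{2}\{E(x+(u/2)y)+E(x-(u/2)y)-2E(x)\}$, so the quotient equals $2\{E(x+(u/2)y)+E(x-(u/2)y)-2E(x)\}$. Taking the supremum over $x\in S$ and $\|y\|=1$ and comparing with \eref{mos} produces the desired inequality immediately.

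For the upper bound $\rho_{1}(E,u,S)\leq 2\rho(E,u,S)$, I would fix $x\in S$, $\|y\|=1$, and $\lambda\in(0,1)$ and exploit the two convex-combination identities
$$x-\lambda uy=\lambda(x-uy)+(1-\lambda)x,\qquad x+(1-\lambda)uy=(1-\lambda)(x+uy)+\lambda x.$$
Convexity of $E$ applied to each identity gives $E(x-\lambda uy)\leq \lambda E(x-uy)+(1-\lambda)E(x)$ and $E(x+(1-\lambda)uy)\leq (1-\lambda)E(x+uy)+\lambda E(x)$. Multiplying the first by $(1-\lambda)$, the second by $\lambda$, adding, subtracting $E(x)$, and simplifying the coefficient $(1-\lambda)^2+\lambda^2-1=-2\lambda(1-\lambda)$, I would obtain
$$(1-\lambda)E(x-\lambda uy)+\lambda E(x+(1-\lambda)uy)-E(x)\leq \lambda(1-\lambda)\bigl\{E(x+uy)+E(x-uy)-2E(x)\bigr\}.$$
Dividing by $\lambda(1-\lambda)$ and passing to the supremum over $x\in S$, $\|y\|=1$, $\lambda\in(0,1)$, the right-hand side is bounded by $2\rho(E,u,S)$ and the claim follows.

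Neither half presents a serious obstacle; the only point worth isolating is the recognition that the pointwise symmetric second difference $E(x+uy)+E(x-uy)-2E(x)$, which drives $\rho$, arises naturally as the upper bound after applying convexity to the two shifted points appearing in $\rho_1$. The symmetric choice $\lambda=1/2$ is what forces the rescaling $u\mapsto u/2$ on the left-hand side of the lower bound, and indicates that the factor $4$ there is sharp for the argument used.
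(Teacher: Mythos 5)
Your proof is correct. Note that the paper itself gives no proof of this lemma --- it only cites Z\u{a}linescu \cite{Za}, page 205 --- so there is no in-paper argument to compare against; your two steps (specializing $\lambda=1/2$ in the definition of $\rho_1$ for the lower bound, and applying convexity to the identities $x-\lambda uy=\lambda(x-uy)+(1-\lambda)x$ and $x+(1-\lambda)uy=(1-\lambda)(x+uy)+\lambda x$ for the upper bound) are exactly the standard argument, and both computations, including the coefficient simplification $(1-\lambda)^2+\lambda^2-1=-2\lambda(1-\lambda)$, check out. The only hypothesis you implicitly use is that $E$ is defined on all of $H$ (so that $x\pm uy$ need not lie in $S$), which the lemma provides.
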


The next lemma shows  the relation between the modulus of uniform smoothness and {\bf Condition 1}. 
\begin{lemma}
\label{uniformsmooth}
Let $E$ be a convex function defined on a Hilbert space $H$ and $E$ be Frechet differentiable on a set $S \subset H$. The following statements are equivalent for any $q \in (1,2]$ and $M>0$. 
\vskip .1in
\noindent
{\rm (i)}  There exists  $\al > 0$, such that
 for all  $x \in S, x' \in H, \|x-x'\|\leq M$, 
\begin{equation}
\label{unismooth1}
E(x')-E(x)-\lan E'(x),x'-x \ran \leq \al \|x'-x\|^{q}.
\end{equation}
\vskip .1in
\noindent 
{\rm (ii)}There exists $\al_{1} > 0$,  such that
\begin{equation}
\label{unismooth2}
\rho(E,u,S) \leq \al_{1} u^{q}, \quad 0<u \leq M.
\end{equation}
The same result holds with $\rho $ replaced by $\rho_1$.
\end{lemma}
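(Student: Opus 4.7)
I would prove the two implications separately, invoking Lemma~\ref{equimodu} to pass between $\rho$ and $\rho_1$.

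\textbf{(i)$\Rightarrow$(ii).} This is the easy direction. For $x\in S$, $\|y\|=1$ and $0<u\le M$, both points $x\pm uy$ satisfy $\|(x\pm uy)-x\|=u\le M$, so I may apply (i) with $x'=x+uy$ and with $x'=x-uy$ and add the two inequalities; the linear terms cancel and I obtain $E(x+uy)+E(x-uy)-2E(x)\le 2\al u^q$. Taking the supremum over $x\in S$ and $\|y\|=1$ gives $\rho(E,u,S)\le \al u^q$, and Lemma~\ref{equimodu} then also yields $\rho_1(E,u,S)\le 2\al u^q$.

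\textbf{(ii)$\Rightarrow$(i).} This is the substantive direction, since (ii) controls only a symmetric second difference whereas (i) controls a Bregman-type quantity involving $E'(x)$. By Lemma~\ref{equimodu} I may work with the bound $\rho_1(E,u,S)\le \al_1' u^q$ for $0<u\le M$ (with $\al_1'=2\al_1$). For $x\in S$ and $h\in H$ with $0<\|h\|<M$, I set $y=h/\|h\|$ and, for $\la\in(0,1)$, choose $u=\|h\|/(1-\la)$, so $(1-\la)uy=h$; for $\la$ small enough, $u\le M$. The defining inequality for $\rho_1$ then reads
\[
(1-\la)\!\left[E\!\left(x-\tfrac{\la}{1-\la}h\right)-E(x)\right]+\la\!\left[E(x+h)-E(x)\right]\le \al_1'\la(1-\la)^{1-q}\|h\|^q.
\]
Dividing by $\la$, isolating $E(x+h)-E(x)$, and substituting $t=\la/(1-\la)$, this rearranges to
\[
E(x+h)-E(x)\le \frac{E(x)-E(x-th)}{t}+\al_1'(1-\la)^{1-q}\|h\|^q.
\]
Letting $\la\to 0^+$ (equivalently $t\to 0^+$), Frechet differentiability at $x$ gives $(E(x)-E(x-th))/t\to\lan E'(x),h\ran$, while $(1-\la)^{1-q}\to 1$, producing (i) with constant $\al=\al_1'$. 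The boundary case $\|h\|=M$ is then handled by applying the inequality to $(1-\e)h$ and sending $\e\to 0^+$, using continuity of $E$ on $\Omega$ (automatic from convexity and local boundedness).

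\textbf{Main obstacle.} The delicate step is the derivative extraction: passing from a symmetric modulus bound to a one-sided Bregman-type bound requires carefully choosing $u=\|h\|/(1-\la)$ and taking $\la\to 0^+$, with the side condition $u\le M$ forcing the restriction $\|h\|<M$ and the subsequent continuity argument at the boundary. Once both implications are in hand, the last sentence of the lemma is automatic: starting from any bound of polynomial type $u^q$ on one of $\rho$, $\rho_1$, Lemma~\ref{equimodu} immediately delivers the analogous bound (with an adjusted constant) on the other, so the equivalence with $\rho$ replaced by $\rho_1$ reduces to the same argument together with constant chasing.
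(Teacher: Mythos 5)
Your proof is correct and follows essentially the same route as the paper: the easy direction by adding the two applications of (i) at $x\pm uy$, and the converse by letting $\la\to 0^+$ in the defining inequality for $\rho_1$ and extracting $\lan E'(x),\cdot\ran$ via Frechet differentiability, with Lemma~\ref{equimodu} used to pass between $\rho$ and $\rho_1$. The only (immaterial) difference is your parametrization $u=\|h\|/(1-\la)$, which pins the target point at $x+h$ but forces the strict restriction $\|h\|<M$ and the extra boundary limit, whereas the paper fixes $u=\|x'-x\|$ and lets the point $x+(1-\la)uy$ converge to $x'$ by continuity, avoiding that case split.
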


\begin{proof}  While this is a particular case of Corollary 3.5.7 from \cite{Za},  for completeness of this paper, we provide a simple proof of this lemma.     First, observe that because of Lemma \ref{equimodu}, statement (ii) for $\rho$ and $\rho_1$ are equivalent, and so we can use them interchangeably.
Assume that the first statement  is true. 
For any $x \in S$, $y\in H$,  $\|y\|=1$ and any $0<u\leq M$, let $x':=x+uy$,  $x'':=x-uy$. Then,  we have $\|x-x'\|=u\leq M$,  $\|x''-x\|=u \leq M$. We apply (\ref{unismooth1}) for the pairs $(x',x) $ and $(x'',x)$ to obtain
$$E(x+uy)-E(x)-u\lan E'(x),y \ran \leq  \al u^{q},\quad 
E(x-uy)-E(x)+u\lan E'(x),y\ran \leq \al u^{q}.$$
Therefore, we have
$$E(x+uy)+E(x-uy)-2E(x) \leq 2\al u^{q}.
$$
We take the supremum over $x \in S, y \in H, \|y\|=1$ and derive
$\rho(E,u,S) \leq \al u^{q}$, $0< u \leq M$, which gives the lemma for $\rho$.
 
Conversely, suppose  that (ii) holds for $\rho_1$.   Then,
 for any $\lambda \in (0,1)$ and any $x \in S, y\in H, \|y\|=1, 0<u\leq M$, 
$$
\frac{(1-\lambda)E(x-\lambda uy)+\lambda E(x+(1-\lambda) uy)-E(x)}{\lambda(1-\lambda)}
\leq \al_{1}u^{q}.
$$
This is the same as saying
$$
\frac{E(x-\lambda u y)-E(x)}{(1-\lambda)\lambda}+\frac{E(x+(1-\lambda)uy)-E(x-\lambda u y)}{1-\lambda} \leq \al_{1}u^{q}.
$$
We let $\lambda \to 0^{+}$ and use  the continuity of $E$ and the definition of Frechet derivative $E'(x)$ with $h=-\lambda u y$, to obtain
$$
\lan E'(x),-uy\ran+E(x+uy)-E(x) \leq \al_{1}u^{q}.
$$
Now, for any $x\in S$,  $x' \in H$,  $\|x'-x\| \leq M$, we let $u=\|x'-x\|$,  $y=\frac{x'-x}{\|x'-x\|}$. 
The above inequality can be written as
$$
E(x')-E(x)-\lan E'(x), x'-x\ran \leq \al_{1}\|x'-x\|^{q},$$
which is (\ref{unismooth1}) with $\al=\al_{1}$. 
\end{proof}
\noindent

\subsection{Condition 2}

\noindent

We first observe the following:

\noindent
{\bf Claim 1.}  
{\it If {\bf Condition 2} holds for a convex function $E$  and a set $\Omega$ that is convex and bounded, then 
{\bf Condition 2} holds for all $x$, $x'\in \Omega$  with $\beta$ replaced by $\beta_0>0$. }
\vskip .1in
{\it Proof.} 
  Since $\Omega$ is bounded, there is $L>0$, such
that $diam(\Omega)\leq LM$. Let $x,x'\in \Omega$. If $\|x-x'\|\leq M$, {\bf Condition 2} holds for the pair $(x,x')$ provided $\beta_0\le \beta$.   If $\|x-x'\|>M$, we 
chose a point $x_1$, such that
$$
x_1=\gamma x'+(1-\gamma)x\in \Omega, \quad \gamma:=\frac{M}{\|x-x'\|}\geq L^{-1}.
$$
Clearly $\|x-x_1\|=M$, and therefore 
$$E(x_1)-E(x)-\langle E' (x),x_1-x\rangle \geq \be  \|x_1-x\|^p.$$
 Because of the convexity of $E$, 
$$
 E(x_1)-E(x)\leq \gamma [E(x')-E(x)].
$$
A combination of the last two inequalities  and the fact that 
$x_1-x=\gamma(x'-x)$
result in
$$
 E(x')-E(x)-\langle E' (x),x'-x\rangle \geq \be \gamma^{p-1}\|x'-x\|^p\geq \beta L^{1-p}\|x'-x\|^p.
$$
Therefore, the claim has been proven with $\beta_0=\min\{\beta  ,\beta L^{1-p}\}$.
\hfill $\Box$

 \vskip .1in 
Note that {\bf Condition 2} is a generalization of the notion of strongly convex functions.
Recall that a  function $E$ is called strongly convex on $H$, if there is a constant $\beta>0$, called the convexity parameter of  $E$, such that
$$
E(x')-E(x)-\langle E' (x),x'-x\rangle \geq \be \|x'-x\|^2 ,\quad x,x'\in H.
$$

Next, we discuss the  compatibility  between the convexity of $E$ and {\bf Condition 2}.
 
\begin{lemma}
\label{con}
Let $E$ be a Frechet differentiable function on $H$. 
$E$ is convex on H if and only if 
$$
 E(x')-E(x)-\langle E' (x),x'-x\rangle \geq 0, \quad \hbox{for all}\,\, x,\,\,x'\in H, \quad \|x-x'\|\leq M.
 $$
\end{lemma}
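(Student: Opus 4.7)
The plan is to handle the two directions of the equivalence separately. The forward direction is the usual tangent-line argument done locally: fix $x, x' \in H$ with $\|x - x'\| \leq M$, and for $t \in (0,1)$ use convexity to write $E(x + t(x'-x)) \leq (1-t)E(x) + tE(x')$, so
$$\frac{E(x + t(x'-x)) - E(x)}{t} \leq E(x') - E(x).$$
Letting $t \to 0^+$ and invoking Frechet differentiability of $E$ at $x$ (applied to the increment $h = t(x'-x)$) gives $\langle E'(x), x' - x\rangle \leq E(x') - E(x)$.

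For the backward direction, fix $x, y \in H$ and define $g : [0,1] \to \mathbb{R}$ by $g(\lambda) := E((1-\lambda)x + \lambda y)$. The chain rule (or a direct computation from the Frechet derivative) shows that $g$ is differentiable on $[0,1]$ with $g'(\lambda) = \langle E'((1-\lambda)x + \lambda y), y - x\rangle$. It suffices to prove that $g$ is convex, and since $g$ is differentiable, this is equivalent to showing $g'$ is non-decreasing. I would first establish the following local monotonicity: if $0 \leq s < t \leq 1$ and $(t-s)\|y - x\| \leq M$, put $u := (1-s)x + sy$ and $v := (1-t)x + ty$, so that $\|u - v\| \leq M$ and $v - u = (t-s)(y-x)$. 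Applying the hypothesis to the pairs $(u,v)$ and $(v,u)$ and adding the two resulting inequalities eliminates the $E$ terms and yields $\langle E'(v) - E'(u), v - u\rangle \geq 0$, which, dividing by $t - s > 0$, gives $g'(s) \leq g'(t)$.

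The main obstacle is handling arbitrary $s < t$ when $\|y - x\| > M$, so that the single-step argument above does not directly apply. For this I would partition $[s,t]$ by $s = s_0 < s_1 < \cdots < s_n = t$ with each consecutive difference $s_{i+1} - s_i \leq M/\|y - x\|$ (the case $x = y$ being trivial), apply the local monotonicity on each subinterval to get $g'(s_i) \leq g'(s_{i+1})$, and chain these inequalities to conclude $g'(s) \leq g'(t)$. Therefore $g'$ is non-decreasing on $[0,1]$, so $g$ is convex, and choosing $\lambda \in [0,1]$ yields
$$E((1-\lambda)x + \lambda y) = g(\lambda) \leq (1-\lambda)g(0) + \lambda g(1) = (1-\lambda)E(x) + \lambda E(y),$$
which is the required convexity of $E$.
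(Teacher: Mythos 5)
Your proof is correct, and it is worth noting that the paper itself does not actually supply an argument here: it cites the unrestricted finite-dimensional statement in Boyd--Vandenberghe and asserts that ``simple modifications'' handle the constraint $\|x-x'\|\le M$, without giving them. Your write-up is a complete realization of exactly those modifications. The forward direction is the standard tangent-line computation and needs no localization. For the backward direction, the textbook route in \cite{BV} applies the first-order inequality at the point $z=(1-\lambda)x+\lambda y$ against both endpoints $x$ and $y$ and takes a convex combination; under the restriction $\|x-x'\|\le M$ this only yields convexity of $E$ along segments of length at most $M$, so some gluing step is genuinely required and cannot be waved away. Your choice to instead prove monotonicity of $g'(\lambda)=\langle E'((1-\lambda)x+\lambda y),y-x\rangle$ on short subintervals (by adding the two first-order inequalities for a nearby pair $(u,v)$, which gives $\langle E'(v)-E'(u),v-u\rangle\ge 0$) and then chain over a partition of $[s,t]$ with mesh at most $M/\|y-x\|$ is a clean way to do this: monotonicity is transitive, so the local estimates concatenate immediately, whereas ``local convexity implies global convexity along a line'' would need its own small argument. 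The only steps you leave implicit --- that $g$ is differentiable with the stated derivative (immediate from the definition of the Frechet derivative with increment $h(y-x)$) and that a differentiable function on an interval with non-decreasing derivative is convex (mean value theorem) --- are standard. So your proof is valid and, if anything, more complete than what the paper offers.
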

\noindent
{\it Proof.} For convex functions on $\R^n$, a proof (without the restriction $\|x'-x\|\le M$)  can be found in \cite{BV}.  Simple modifications of this proof (which we do not give) result in a proof of the lemma.  \hfill $\Box$

Finally, we present a concept which is dual to the modulus of uniform smoothness for convex functions, called the modulus of uniform convexity (see \cite{BV2,Za}) and show how it is related to {\bf Condition 2}.
Given a convex function $E: H \to \R$ and a set $S\subset H$, its modulus of uniform convexity on $S$ is defined by 
\begin{equation}
\delta_{1}(E,u,S):=\inf_{x\in S, \|y\|=1, \lambda \in (0,1)}\left\{\frac{(1-\lambda)E(x-\lambda uy)+\lambda E(x+(1-\lambda) uy)-E(x)}{\lambda(1-\lambda)}\right\}.
\end{equation}
We prove a lemma (see \cite{Za}) that shows the equivalence of  {\bf Condition 2} and certain behavior of the
modulus of uniform convexity $\delta_{1}$ of $E$.
\begin{lemma}
\label{uniformconvex}
Let $E$ be a convex function defined on a Hilbert space $H$ and $E$ be Frechet differentiable on $S \subset H$. The following statements are equivalent for any $p \in [2,\infty)$ and $M>0$. 

 \noindent
{\rm (i)} There exists $\be > 0$, such that
 for all $x \in S,\,\,x'\in H, \|x-x'\|\leq M,$
\begin{equation}
\label{st3}
E(x')-E(x)-\lan E'(x),x'-x \ran \geq \be \|x'-x\|^{p}.
\end{equation}

\noindent
{\rm (ii)} There exists  $\be_1 > 0$,  such that
\begin{equation}
\label{st4}
\delta_{1}(E,u,S) \geq \be_1  u^{p}, \quad 0< u \leq M.
\end{equation}
\end{lemma}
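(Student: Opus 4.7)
The proof will closely mirror the structure of Lemma \ref{uniformsmooth}, with the inequalities reversed and some care taken about the constants since we now need a single universal $\beta_1$ that works for all $\lambda \in (0,1)$ in the infimum defining $\delta_1$.

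For the direction (i) $\Rightarrow$ (ii), my plan is: fix $x \in S$, $\|y\|=1$, $0 < u \leq M$, and $\lambda \in (0,1)$. Set $x' := x + (1-\lambda)uy$ and $x'' := x - \lambda u y$. Then $\|x'-x\| = (1-\lambda)u \leq M$ and $\|x''-x\| = \lambda u \leq M$, so (i) applies to both pairs. Multiply the inequality for $(x',x)$ by $\lambda$ and the one for $(x'',x)$ by $(1-\lambda)$, then add: the inner-product terms cancel, and one obtains
\[
\lambda E(x+(1-\lambda)uy) + (1-\lambda)E(x-\lambda uy) - E(x) \;\geq\; \beta\bigl[\lambda(1-\lambda)^p + (1-\lambda)\lambda^p\bigr]u^p.
\]
Dividing by $\lambda(1-\lambda)$ leaves $\beta\bigl[(1-\lambda)^{p-1}+\lambda^{p-1}\bigr]u^p$ on the right. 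Since $p\geq 2$, the function $\lambda\mapsto \lambda^{p-1}+(1-\lambda)^{p-1}$ is convex on $(0,1)$ and attains its minimum $2^{2-p}$ at $\lambda=1/2$, so taking infima over $x\in S$, $\|y\|=1$, $\lambda\in(0,1)$ gives $\delta_1(E,u,S)\geq \beta_1 u^p$ with $\beta_1:=2^{2-p}\beta$.

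For the converse (ii) $\Rightarrow$ (i), I would rewrite the definition of $\delta_1$ in the equivalent form
\[
\frac{E(x-\lambda uy)-E(x)}{\lambda} + \frac{E(x+(1-\lambda)uy)-E(x)}{1-\lambda} \;\geq\; \beta_1 u^p,
\]
valid for all $x\in S$, $\|y\|=1$, $0<u\leq M$, $\lambda\in(0,1)$. Letting $\lambda \to 0^+$, the first term tends to $-u\langle E'(x),y\rangle$ by the definition of the Frechet derivative (with $h=-\lambda u y$), while the second term tends to $E(x+uy)-E(x)$ by continuity of $E$. This yields $E(x+uy)-E(x)-u\langle E'(x),y\rangle \geq \beta_1 u^p$. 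For arbitrary $x\in S$ and $x'\in H$ with $0<\|x'-x\|\leq M$, setting $u:=\|x'-x\|$ and $y:=(x'-x)/\|x'-x\|$ recovers (\ref{st3}) with $\beta=\beta_1$; the case $x'=x$ is trivial.

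Neither direction presents a serious obstacle; the one subtle point is the passage to the infimum in (i) $\Rightarrow$ (ii), where one must check that the factor $\lambda^{p-1}+(1-\lambda)^{p-1}$ is uniformly bounded below on $(0,1)$ when $p\geq 2$, so that a genuine constant $\beta_1>0$ can be extracted. All other steps are the same cancellation, limiting, and substitution arguments already used in the proof of Lemma \ref{uniformsmooth}.
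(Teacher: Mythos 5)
Your proposal is correct and follows essentially the same route as the paper's proof: the same two applications of (\ref{st3}) at $x\pm$ shifts with weights $\lambda$ and $1-\lambda$, the same lower bound $\lambda^{p-1}+(1-\lambda)^{p-1}\geq 2^{2-p}$ yielding $\beta_1=2^{2-p}\beta$, and the same rewriting of $\delta_1$ followed by the limit $\lambda\to 0^+$ for the converse. The only (harmless) difference is that you justify the bound $2^{2-p}$ by convexity, which the paper simply asserts.
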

\begin{proof}
Assume that the first statement  is true.
For any $x \in S$, $y\in H$,  $\|y\|=1$, $0<u\leq M$ and $\lambda \in (0,1)$, let $x':=x-\lambda uy,$  $x'':=x+(1-\lambda)uy$.  Then, we have $\|x-x'\|= \lambda u\leq M$,  $\|x''-x\| =(1-\lambda)u\leq M$. 
We apply (\ref{st3}) for $x\in S$,  $x'\in H$ and $x\in S$,  $x''\in H$ to derive
$$E(x-\lambda uy)-E(x)+\lambda u\lan E'(x),y \ran \geq  \be \lambda^{p}u^{p},$$
$$E(x+(1-\lambda)uy)-E(x)-(1-\lambda)u\lan E'(x),y\ran \geq \be(1-\lambda)^{p} u^{p}.$$
Multiplying the first inequality by $(1-\lambda)$,  the second one by $\lambda$ and adding them  yields
$$(1-\lambda)E(x-\lambda uy)+\lambda E(x+(1-\lambda)uy)-E(x) \geq \be\lambda (1-\lambda)(\lambda^{p-1}+(1-\lambda)^{p-1}) u^{p}.$$
Since  $\lambda^{p-1}+(1-\lambda)^{p-1} \geq 2^{2-p}$ for $\lambda \in (0,1)$, we have
$$
\frac{(1-\lambda)E(x-\lambda uy)+\lambda E(x+(1-\lambda)uy)-E(x)}{\lambda(1-\lambda)} \geq 2^{2-p}\be u^{p}.$$
We take the infimum over $x \in S$,  $y \in H$,  $\|y\|=1$ and $\lambda \in (0,1)$ and obtain that
$\delta_{1}(E,u,S) \geq 2^{2-p}\be u^{p}$, $0 <u \leq M$,
which is (\ref{st4}) with $\be_{1}=2^{2-p}\be$.

Conversely, suppose that  for some $\be>0$ we have $\delta_{1}(E,u,S) \geq \be u^{p}$ for all $0<u\leq M$.
It follows from the definition of $\delta_{1}$ that
 for any $\lambda \in (0,1)$, $x \in S$,  $y\in H$,  $\|y\|=1$ and $0<u\leq M$, 
$$
\frac{(1-\lambda)E(x-\lambda uy)+\lambda E(x+(1-\lambda) uy)-E(x)}{\lambda(1-\lambda)}
\geq \be_{1}u^{p}.
$$
This is the same as saying
$$
\frac{E(x-\lambda u y)-E(x)}{\lambda}+\frac{E(x+(1-\lambda)uy)-E(x)}{1-\lambda} \geq \be_1 u^{p}.
$$
We let $\lambda \to 0^{+}$ and by the continuity of $E$ and the definition of Frechet derivative $E'(x)$ for $h=-\lambda u y$, we obtain
$$
\lan E'(x),-uy\ran+E(x+uy)-E(x) \geq \be_1u^{p}.
$$
Now, for any $x\in S$,  $x' \in H$,  $\|x'-x\| \leq M$, we let $u=\|x'-x\|$,  $y=\frac{x'-x}{\|x'-x\|}$ and derive
$$
E(x')-E(x)-\lan E'(x), x'-x\ran \geq \be_1 \|x'-x\|^{p},
$$
which is (\ref{st3}) with $\beta=\beta_1$.
\end{proof}
\subsection{The conditions on $E$ and their connection  to Compressed Sensing.}
Let us summarize that as a result of  Lemma \ref{uniformsmooth}  and Lemma \ref{uniformconvex}, we have proven the following.
\begin{lemma}
Let $E$ be a convex function defined on  a Hilbert space $H$. 
Let us denote by $\Omega$ the set $\Omega=\{x \in H: E(x) \leq E(0)\}$ and $E$ be Frechet differentiable on $\Omega$. Let $\delta_1(E,\cdot,\Omega)$ and  $\rho_1(E,\cdot,\Omega)$ be the  modulus of uniform convexity and modulus of uniform smoothness of $E$ on $\Omega$, respectively. The following two statements are equivalent
\vskip .1in
\noindent
{\rm (i)} $E$ satisfies {\bf Condition 1} and {\bf Condition 2}.
 \vskip .1in
 \noindent
 {\rm (ii)} There exist constants $\al_1 > 0, \be_1 > 0$,
  such that
\begin{equation}
\nonumber
\be_1 u^p \leq \delta_1(E,u,\Omega) \leq \rho_1(E,u,\Omega) \leq \al_1  u^q, \quad u \in (0,M].
\end{equation}
 
\end{lemma}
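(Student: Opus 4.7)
The plan is to observe that this final lemma is essentially a bookkeeping combination of the two preceding equivalence lemmas, applied with the specific choice $S=\Omega$. Since Condition 1 and Condition 2 are each independently characterized in terms of the one-sided bounds on $\rho$ and $\delta_1$, the only new content here is the middle inequality $\delta_1(E,u,\Omega)\le \rho_1(E,u,\Omega)$, which follows immediately from definitions.

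First I would invoke Lemma \ref{uniformsmooth} with $S=\Omega$: this yields the equivalence of Condition 1 with the existence of $\alpha_1>0$ such that $\rho(E,u,\Omega)\le \alpha_1 u^q$ for $0<u\le M$, and via Lemma \ref{equimodu}, equivalently $\rho_1(E,u,\Omega)\le \alpha_1' u^q$ (absorbing the constant factor of $2$ or $4$ into a renamed $\alpha_1$). Next I would invoke Lemma \ref{uniformconvex} with $S=\Omega$ to get the equivalence of Condition 2 with the existence of $\beta_1>0$ such that $\delta_1(E,u,\Omega)\ge \beta_1 u^p$ for $0<u\le M$. Both applications are legitimate since $E$ is assumed Frechet differentiable on $\Omega$, which is precisely the hypothesis these lemmas require on $S$.

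For the middle inequality, I would note that the expression
\[
\Phi(x,y,\lambda,u):=\frac{(1-\lambda)E(x-\lambda uy)+\lambda E(x+(1-\lambda)uy)-E(x)}{\lambda(1-\lambda)}
\]
is the same quantity whose infimum over $x\in\Omega,\|y\|=1,\lambda\in(0,1)$ defines $\delta_1(E,u,\Omega)$ and whose supremum defines $\rho_1(E,u,\Omega)$. Since $\Omega$ is nonempty (it contains $0$), the infimum cannot exceed the supremum, giving $\delta_1(E,u,\Omega)\le \rho_1(E,u,\Omega)$ for every $u>0$. Combining the three inequalities produces statement (ii), and conversely (ii) clearly implies the bounds required to invoke the two lemmas in the reverse direction, yielding (i).

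There is no real obstacle here: the argument is a direct repackaging of the two characterization lemmas plus a one-line use of $\inf\le\sup$. The only mild subtlety is matching the various constants across the two forms of the modulus of smoothness (via Lemma \ref{equimodu}) so that a single symbol $\alpha_1$ appears in the chain of inequalities, but this is a harmless renaming.
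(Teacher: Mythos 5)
Your proposal is correct and matches the paper's approach exactly: the paper offers no separate proof, simply stating that the lemma follows from Lemma \ref{uniformsmooth} and Lemma \ref{uniformconvex} applied with $S=\Omega$, which is precisely your repackaging. Your explicit note that the middle inequality $\delta_1(E,u,\Omega)\le\rho_1(E,u,\Omega)$ is just $\inf\le\sup$ of the same expression over a nonempty set is a small but welcome addition that the paper leaves implicit.
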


Let us next observe that (i) of the above lemma has a similar flavor to conditions that are imposed in compressed sensing.
Indeed, conditions similar to {\bf Condition 1} and {\bf Condition 2} have been considered by  Zhang in   \cite{Z1}, where he solves a
sparse optimization problem in $\R^n$, using greedy based strategies.  He considers  any convex function $E$ on $\R^n$ for which there are constants $\al(s), \be(s) >0$ such that
\begin{equation}
\label {SRC}
\be(s) \|x'-x\|_{2}^2 \leq E(x')-E(x)-\langle E' (x),x'-x\rangle \leq \al(s) \|x'-x\|_{2}^2,
\end{equation}
holds whenever $x,x' \in \R^n$ and   $x-x'$ has $\leq s$ nonzero coordinates.
Notice, that \eref{SRC} is the same as our {\bf Condition 1} and {\bf Condition 2} except that it is only required to hold
whenever $x-x'$ is $s$ sparse whereas in our case we require this to hold for all $x,x'$ with $\|x-x'\|\le M$.
Zhang applied his results to the decoding problem in compressed sensing in which case $E(x)=\|Ax-b\|_2^2$,   and $A$ is a given $k\times n$ 
matrix with $k<<n$. For this choice of  $E$, the Frechet derivative
$E'(x)$ can be computed explicitly as
$\langle E'(x),. \rangle=2\langle A^T( Ax-b),.\rangle$.  Moreover, we have
   \begin{eqnarray*}
 E(x')-E(x)-\langle E' (x),x'-x\rangle&=&\|Ax'-b\|_2^2-\|Ax-b\|_2^2 -2 \langle A^T(Ax-b),x'-x \rangle\\
  &=&\|Ax'-Ax\|_2^2=\|A(x-x')\|_2^2.
 \end{eqnarray*}
If we  denote by $z=x'-x$, condition (\ref{SRC}) becomes
\begin{equation}
\nonumber
\be(s) \|z\|_{2}^2 \leq \| Az\|_{2}^2 \leq \al(s) \|z\|_{2}^2,
\end{equation}
for $s$ sparse vectors $z\in \R^n$. This condition  is known as the {\it Restricted Isometry Property}  and was first introduced by Candes and Tao (see \cite{CT1}, \cite{Dono}).  For  applications in compressed sensing one needs that $\alpha(s),\beta(s)$ are sufficiently close to one.

\section{Greedy algorithms for optimization}
\label{greedy}
In this section, we  introduce the two algorithms for convex minimization in a Hilbert space $H$ that we will analyze.
As usual, we assume that $\{\varphi_j\}_{j=1}^\infty$ is an orthonormal  basis for $H$.  We begin with the OMP(co) algorithm.
\vskip 0.1in

\noindent
 {\bf Orthogonal Matching Pursuit (OMP(co)): }
 \begin{itemize}
 \item  {\bf Step $0$}:    Define $x_0:=0$.  
 If $E'(x_0)=0$, stop the algorithm and define $x_k:=x_0$, $k\ge 1$.
 
 \item {\bf Step $ m$}:  Assuming $x_{m-1}$ has been defined and $E'(x_{m-1})\neq 0$,
 Find 
 $$\varphi_{j_m}:=\argmax\{ |\langle E'(x_{m-1}),\varphi \rangle|, \varphi \in \mathcal D\},$$
 and define 
  $$x_m:=\displaystyle {\argmin_{x\in span\{\varphi_{j_1}, \varphi_{j_2},\dots,\varphi_{j_m}\}}  E(x)}.$$
 If $E'(x_m)=0$,   stop the algorithm and define $x_ k:=x_m$, $k>m$.  Otherwise, go to {\bf Step $m+1$}.
         \end{itemize}   

 \vskip .1in

  Note that if the algorithm stops at step $m$, then the output $x_m$ of the algorithm is  the minimizer $\bar x$, because of the following well-known lemma.
  \begin{lemma}
\label{FT}
Let  $E$ be a Frechet differentiable convex function, defined on a convex domain $\Omega$.  
Then $E$ has a global minimum at $\bar x \in \Omega$ if and only if $E'(\bar x)=0$.
\end{lemma}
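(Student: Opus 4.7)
The plan is to prove the two implications separately, both by very standard arguments.

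For the forward direction, assume $E$ attains its global minimum at $\bar x \in \Omega$. For any $h \in H$, because $\Omega$ is a convex domain (interpreted as open, or as the whole space, which is the setting in the paper's application), the map $t \mapsto \bar x + t h$ stays in $\Omega$ for all sufficiently small $|t|$. The scalar function $\phi(t) := E(\bar x + th)$ therefore has a local minimum at $t = 0$, and by the definition of Frechet derivative its derivative at $0$ equals $\langle E'(\bar x), h\rangle$. The necessary condition $\phi'(0) = 0$ yields $\langle E'(\bar x), h\rangle = 0$; since $h$ was arbitrary in $H$, we conclude $E'(\bar x) = 0$.

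For the reverse direction, suppose $E'(\bar x) = 0$. The plan is to use the standard first-order characterization of convex Frechet differentiable functions, which is essentially Lemma \ref{con} but applied globally on $\Omega$ without any restriction of the form $\|x - x'\| \le M$. Directly from the convexity inequality
$$E(\bar x + \lambda(x - \bar x)) \le (1 - \lambda) E(\bar x) + \lambda E(x), \quad \lambda \in (0, 1),$$
subtracting $E(\bar x)$, dividing by $\lambda$, and sending $\lambda \to 0^+$ gives $\langle E'(\bar x), x - \bar x\rangle \le E(x) - E(\bar x)$ for every $x \in \Omega$. Substituting $E'(\bar x) = 0$ yields $E(x) \ge E(\bar x)$ for every $x \in \Omega$, which is exactly the statement that $\bar x$ is a global minimizer.

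The only mild subtlety is that Lemma \ref{con} is stated with the local restriction $\|x - x'\| \le M$, so strictly speaking I cannot cite it verbatim for the reverse direction. However, the limit argument above is a direct consequence of the definition of convexity and of Frechet differentiability and requires no such restriction; the length of the segment from $\bar x$ to $x$ is immaterial. Beyond this cosmetic point, there is no real obstacle: both directions are one-line arguments once the appropriate gradient inequality is in hand.
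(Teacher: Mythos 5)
Your proof is correct. There is, however, nothing in the paper to compare it against: the authors state Lemma \ref{FT} as ``well-known'' and give no proof, so your two-sided argument supplies exactly the standard reasoning they are implicitly invoking. Two remarks on the details. First, you are right to flag the interiority issue in the forward direction: if $\bar x$ were a boundary point of $\Omega$, the correct first-order condition would only be the variational inequality $\langle E'(\bar x), x-\bar x\rangle \ge 0$ for all $x\in\Omega$ (consider $E(x)=x$ minimized over $[0,1]$), so one must read ``convex domain'' as open --- or simply note that in the paper's application the minimization is effectively over all of $H$, where the point is moot. Second, your decision not to cite Lemma \ref{con} for the reverse direction, but instead to rederive the gradient inequality $E(x)-E(\bar x)\ge \langle E'(\bar x), x-\bar x\rangle$ directly from the convexity inequality and the definition of the Frechet derivative, is sound: the difference quotient is evaluated along the segment from $\bar x$ toward $x$ with increment $\lambda(x-\bar x)\to 0$, so no restriction of the form $\|x-\bar x\|\le M$ is ever needed. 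I see no gaps.
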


\vskip .1in
\noindent
 {\bf Weak Chebyshev Greedy Algorithm (WCGA(co)): }
The  description of the WCGA(co) is the same as the OMP(co), with the only  difference that a sequence $\{t_k\}_{k=1}^{\infty}$,  $t_k\in (0,1]$
is used to weaken the condition on the choice of $\varphi_{j_m}$. Namely, $\varphi_{j_m}$ is now chosen to satisfy the inequality
$$
|\langle E'(x_{m-1}),\varphi_{j_m} \rangle|\geq t_m\sup_{\varphi \in \mathcal D} \langle E'(x_{m-1}),\varphi\rangle.
$$
When all $t_k=1$, $k\ge 1$, the  WCGA(co) becomes the  OMP(co).

Let us remark that neither of these two algorithms generates a unique sequence $x_m$, $m\ge 0$.  The analysis
that follows applies to any   sequence generated by the corresponding algorithm.
  
For comparison with the results we prove in this paper, we recall the result of  Temlyakov.   Let  ${ A_1}({\mathcal D})$ denote the closure (in $H$) of the convex hull of ${\mathcal D}$.  The following theorem was proved  in \cite{Temlyakov2} in a more general setting of Banach spaces and  general symmetric dictionaries.
\begin{theorem}[\cite{Temlyakov2} Theorem 2.2]
\label{TE}
Let $E$ be a uniformly smooth convex function defined on a Banach space $X$
and  let the set $\Omega:=\{x:E(x) \leq E(0)\}$ be bounded.
Let the  modulus of smoothness of $E$ on $\Omega$ satisfy $\rho(E,u,\Omega)\leq \gamma u^q$, $u>0$, where $1<q\leq 2$.
If for a given    $\e>0$, there is  an element  $\varphi^{\e}\in {\mathcal D}$, such that 
$$
E(\varphi^{\e})\leq \inf_{x \in \Omega}E(x)+\e, \quad \varphi^{\e}/A(\e) \in  A_1(\mathcal D),
$$
for some constant $A(\e)\geq 1$,
then, the output $x^{\rm w}_{m}$ of the WCGA  satisfies the inequality
$$E(x^{\rm w}_m)-\inf_{x\in \Omega}E(x) \leq\max\left\{2\e, C_1A(\e)^q\bigl(C_2+\Sigma_{k=1}^mt_k^{q/(q-1)}\bigr)^{1-q}\right\},$$
with constants  $C_1=C_1(q,\gamma)$ and $C_2=C_{2}(E,q,\gamma)$.
\end{theorem}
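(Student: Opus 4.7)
Let $a_m := E(x^{\rm w}_m) - \inf_{x\in\Omega} E(x)$. Because $x^{\rm w}_m$ minimizes $E$ on a subspace containing $x^{\rm w}_{m-1}$, the sequence $(a_m)$ is non-increasing, so once some $a_{m_0} \leq 2\epsilon$ the first branch of the stated maximum handles every $m \geq m_0$. The plan at any step with $a_{m-1} > 2\epsilon$ is to derive a recursive inequality of the form
\begin{equation*}
a_m \leq a_{m-1} - C(q,\gamma)\, t_m^{q/(q-1)}\bigl(a_{m-1}/A(\epsilon)\bigr)^{q/(q-1)},
\end{equation*}
from which the polynomial decay follows by an elementary argument.

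The two inputs are an upper bound via smoothness and a lower bound on the gradient magnitude the weak greedy rule can exploit. For the first, the hypothesis $\rho(E,u,\Omega) \leq \gamma u^q$ and the convex first-order inequality $E(x-uy) \geq E(x) - u\langle E'(x), y\rangle$ combine via $E(x+uy) \leq 2E(x)-E(x-uy)+2\gamma u^q$ to yield, for $x\in\Omega$ and $\|y\|\leq 1$,
\begin{equation*}
E(x+uy)\leq E(x)+u\langle E'(x), y\rangle + 2\gamma u^q.
\end{equation*}
Choosing $y = \pm \varphi_{j_m}$ with sign making the linear term negative, and using that $x^{\rm w}_m$ minimizes $E$ on $\mathrm{span}\{\varphi_{j_1},\dots,\varphi_{j_m}\}$, gives $E(x^{\rm w}_m) \leq E(x^{\rm w}_{m-1}) - u|\langle E'(x^{\rm w}_{m-1}),\varphi_{j_m}\rangle| + 2\gamma u^q$ for every $u>0$. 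For the second input, Chebyshev-optimality of $x^{\rm w}_{m-1}$ on the previous span gives $\langle E'(x^{\rm w}_{m-1}), x^{\rm w}_{m-1}\rangle = 0$, so convexity and the hypothesis on $\varphi^\epsilon$ imply
\begin{equation*}
a_{m-1}-\epsilon \leq E(x^{\rm w}_{m-1})-E(\varphi^\epsilon) \leq -A(\epsilon)\bigl\langle E'(x^{\rm w}_{m-1}),\varphi^\epsilon/A(\epsilon)\bigr\rangle \leq A(\epsilon)\sup_{\varphi\in\mathcal{D}}|\langle E'(x^{\rm w}_{m-1}),\varphi\rangle|,
\end{equation*}
where the last step uses $\varphi^\epsilon/A(\epsilon)\in A_1(\mathcal{D})$ together with symmetry of $\mathcal{D}$. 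The weak greedy choice then yields $|\langle E'(x^{\rm w}_{m-1}),\varphi_{j_m}\rangle| \geq t_m(a_{m-1}-\epsilon)/A(\epsilon)$.

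Substituting the gradient lower bound into the smoothness inequality and minimizing in $u$ (the optimum sits near $u \sim [t_m(a_{m-1}-\epsilon)/A(\epsilon)]^{1/(q-1)}$) produces $a_m \leq a_{m-1} - C(q,\gamma)\,t_m^{q/(q-1)}((a_{m-1}-\epsilon)/A(\epsilon))^{q/(q-1)}$. On the regime $a_{m-1} > 2\epsilon$, the estimate $a_{m-1}-\epsilon \geq a_{m-1}/2$ converts this to the target recursion. Setting $b_m := a_m/A(\epsilon)^q$, $r := q/(q-1)$, and using Bernoulli's inequality $(1-x)^{1-r} \geq 1 + (r-1)x$ for $x\in[0,1)$ (valid since $1-r<0$) produces $b_m^{1-r} \geq b_{m-1}^{1-r} + c\, t_m^{r}$; telescoping gives $b_m \leq \bigl(b_0^{1-r}+c\sum_{k=1}^m t_k^{r}\bigr)^{1/(1-r)}$, and since $1/(1-r) = 1-q$, this is precisely the bound $C_1 A(\epsilon)^q(C_2 + \sum t_k^{q/(q-1)})^{1-q}$, with $C_2$ absorbing $b_0^{1-r}/c$. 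The main obstacles are bookkeeping ones: one must check that the optimal $u$ keeps $x^{\rm w}_{m-1}+u\varphi_{j_m}$ inside the domain where the smoothness estimate is applicable (handled using boundedness of $\Omega$), and that $C_2$ can be chosen to depend only on $(E,q,\gamma)$ and not on $\epsilon$ or $m$.
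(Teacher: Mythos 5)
The paper does not actually prove this statement: Theorem \ref{TE} is imported verbatim from \cite{Temlyakov2} as background for comparison, so there is no in-paper proof to measure you against. Your argument is essentially Temlyakov's original one, and it is the same template the authors use for their own Theorem \ref{wlm0} (a one-step inequality combining the smoothness upper bound with a greedy lower bound on $|\lan E'(x^{\rm w}_{m-1}),\varphi_{j_m}\ran|$, followed by a recursion lemma of the type of Lemma \ref{lmseq}); I find it correct, including the reduction $a_{m-1}-\e\ge a_{m-1}/2$ on the regime $a_{m-1}>2\e$ and the observation that monotonicity of $a_m$ lets the recursion telescope over all steps up to $m$ whenever $a_m>2\e$. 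The two points you flag as bookkeeping do resolve cleanly: the hypothesis $\rho(E,u,\Omega)\le\gamma u^q$ is assumed for all $u>0$ with only the base point required to lie in $\Omega$, so no domain check on the optimal $u$ is needed; and $C_2=b_0^{1-r}/c$ with $b_0=(E(0)-\inf_{x\in\Omega}E(x))/A(\e)^q$ can be replaced by the $\e$-free quantity $(E(0)-\inf_{x\in\Omega}E(x))^{1-r}/c$, because $A(\e)\ge 1$ gives $b_0^{1-r}\ge (E(0)-\inf_{x\in\Omega}E(x))^{1-r}$ and the outer exponent $1-q$ is negative, so shrinking the bracket only enlarges the bound.
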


 \section{Main results}
 \label {AL}
 In this section, we present our main results and the auxiliary lemmas, needed for their proof.
 First, note that the set $\Omega:=\{x \in H: E(x)\leq E(0)\}$ is convex since it is the level set of a convex function. Also,  
 all outputs $\{x_k\}_{k=1}^{\infty}$ generated by the  OMP(co) (or the  WCGA(co)) are in $\Omega$, since the sequence
$\{E(x_k)\}_{k=1}^{\infty}$ is decreasing and $E(x_1)\leq E(0)$. 
 
\subsection{Auxiliary lemmas} 

\noindent 

Here, we begin with some lemmas that we use to derive our main results. The next lemma is well-known.
  \begin{lemma}
\label{lm0}  
Let $F$ be a Frechet differentiable function. 
Let $V_k:=\text{span}\{\varphi_{j_1}, \dots,\varphi_{j_k}\}$ and $x_k:=\argmin \{F(x):{x\in V_k}\}$. 
Then, we have that $\langle F'( x_k),\varphi\rangle =0$ for every $\varphi \in V_k$.
\end{lemma}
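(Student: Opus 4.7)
The plan is to use the standard first-order optimality condition, reducing the minimization over the subspace $V_k$ to a one-dimensional minimization along each direction $\varphi \in V_k$.

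First, I would fix an arbitrary $\varphi \in V_k$ and consider the scalar function $g:\R\to\R$ defined by $g(t):=F(x_k+t\varphi)$. Since $V_k$ is a linear subspace, the point $x_k+t\varphi$ lies in $V_k$ for every $t\in\R$. Because $x_k$ is a minimizer of $F$ over $V_k$, we have $g(t)\geq g(0)$ for all $t$, so $t=0$ is a global minimum of $g$ on $\R$.

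Next, I would invoke Frechet differentiability of $F$ at $x_k$ to compute $g'(0)$. Writing $h=t\varphi$, the definition
$$
\lim_{h\to 0}\frac{|F(x_k+h)-F(x_k)-\langle F'(x_k),h\rangle|}{\|h\|}=0
$$
applied along $h=t\varphi$ with $\|\varphi\|\neq 0$ (the case $\varphi=0$ is trivial) gives that $g$ is differentiable at $0$ with $g'(0)=\langle F'(x_k),\varphi\rangle$. The one-dimensional first-order condition at the interior minimum $t=0$ then yields $g'(0)=0$, hence $\langle F'(x_k),\varphi\rangle=0$.

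Since $\varphi\in V_k$ was arbitrary, this establishes the claim. There is no real obstacle here; the only thing to be mildly careful about is the reduction from the Frechet derivative of $F$ to the ordinary derivative of the scalar function $g$, which is immediate from the definition of Frechet differentiability applied along the line through $x_k$ in direction $\varphi$.
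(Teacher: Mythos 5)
Your proof is correct and follows essentially the same route as the paper: restrict $F$ to the line $t\mapsto x_k+t\varphi$, use that $t=0$ is a minimizer over all of $\R$ since $V_k$ is a subspace, and identify the one-dimensional derivative at $0$ with $\langle F'(x_k),\varphi\rangle$ via Frechet differentiability. The only cosmetic difference is that the paper carries this out for the basis vectors $\varphi_{j_\ell}$ (spelling out the two one-sided limits) while you treat an arbitrary $\varphi\in V_k$ directly; both are fine.
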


Our  next lemma can be viewed as a  generalization  of Lemma 2.16 from \cite{Tbook}.
\begin{lemma}
\label{lmseq}
 Let $\ell>0$, $r>0$, $B>0$,  $\{a_m\}_{m=1}^{\infty}$ and  $\{r_m\}_{m=2}^{\infty}$ be sequences of non-negative numbers satisfying the inequalities
$$ a_1\leq B, \quad a_{m+1} \leq a_m(1- \frac{r_{m+1}}{r}a_m^\ell), \quad m=1,2,\dots.$$
Then, we have 
\begin{equation}
\label{tuti12}
 a_m \leq \max\{1,\ell^{-1/\ell}\}r^{1/\ell}(rB^{-\ell}+\Sigma_{k=2}^m r_k)^{-1/\ell}, \quad m=2,3, \ldots.
 \end{equation}
\end{lemma}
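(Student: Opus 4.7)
The plan is to derive a lower bound on $a_m^{-\ell}$ that grows additively in the weights $r_k$, and then invert it. To set up, I first dispose of the trivial case: if $a_m=0$ for some $m$, the recursion forces $a_k=0$ for all $k>m$, and the claimed bound is automatic since its right-hand side is strictly positive (both $B>0$ and $r>0$). Thus I may assume $a_m>0$ throughout the argument.

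Next, I would exploit the dichotomy built into the recursion $a_{m+1}\leq a_m(1-(r_{m+1}/r)a_m^\ell)$. If $(r_{m+1}/r)a_m^\ell\geq 1$, the right-hand side is non-positive, forcing $a_{m+1}=0$ by non-negativity, and we are back in the trivial case. Otherwise $y_m:=(r_{m+1}/r)a_m^\ell\in[0,1)$, both sides are strictly positive, and raising $a_{m+1}\leq a_m(1-y_m)$ to the power $-\ell$ gives $a_{m+1}^{-\ell}\geq a_m^{-\ell}(1-y_m)^{-\ell}$.

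The key elementary inequality is that $(1-y)^{-\ell}\geq 1+\ell y$ for every $\ell>0$ and $y\in[0,1)$; this follows by noting that $f(y):=(1-y)^{-\ell}-1-\ell y$ satisfies $f(0)=0$ and $f'(y)=\ell((1-y)^{-\ell-1}-1)\geq 0$. Substituting into the previous inequality yields $a_{m+1}^{-\ell}\geq a_m^{-\ell}+\ell r_{m+1}/r$, and iterating from $m=1$ using $a_1^{-\ell}\geq B^{-\ell}$ produces, for every $m\geq 2$,
$$a_m^{-\ell}\geq B^{-\ell}+\frac{\ell}{r}\sum_{k=2}^m r_k,$$
so $a_m\leq r^{1/\ell}\bigl(rB^{-\ell}+\ell\sum_{k=2}^m r_k\bigr)^{-1/\ell}$.

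To match the stated bound \eqref{tuti12}, I would compare this with the target expression, which has $\sum_{k=2}^m r_k$ unweighted inside the parentheses. An elementary check shows that for any $S\geq 0$, $(rB^{-\ell}+S)/(rB^{-\ell}+\ell S)\leq \max\{1,1/\ell\}$ (trivial when $\ell\geq 1$; attained as $rB^{-\ell}\to 0$ when $\ell<1$); raising to the $1/\ell$ power converts this factor into the constant $\max\{1,\ell^{-1/\ell}\}$ appearing in \eqref{tuti12}. The only real obstacle is this last bookkeeping step, where the Bernoulli factor of $\ell$ inside the sum must be absorbed into the prefactor, producing the $\max\{1,\ell^{-1/\ell}\}$ weight in the regime $\ell<1$; the inversion itself is routine once one has the lower bound on $a_m^{-\ell}$.
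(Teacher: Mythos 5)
Your proof is correct and follows essentially the same route as the paper's: both arguments derive an additive lower bound of the form $a_{m+1}^{-\ell}\ge a_m^{-\ell}+c\,r_{m+1}/r$ by induction on the reciprocals and then invert, absorbing the resulting factor of $\ell$ into the constant $\max\{1,\ell^{-1/\ell}\}$. The only difference is cosmetic: you use the single Bernoulli-type inequality $(1-y)^{-\ell}\ge 1+\ell y$, valid for all $\ell>0$, where the paper splits into the cases $\ell\ge 1$ (using $(1-t)^{-\ell}\ge (1-t)^{-1}\ge 1+t$) and $0<\ell<1$ (using concavity of $(1-t)^{\ell}$); your unified inequality in fact yields a bound at least as strong in both regimes.
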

\begin{proof} 
Let us first notice that from the recursive relation and the fact that all $a_m$'s are non-negative, we have 
\begin{equation}
\label{ppp}
0\leq1- \frac{r_{m+1}}{r}a_m^\ell\leq1, \quad m=1,2, \ldots.
\end{equation}

 We will show that for $m=2,3, \ldots$
\begin{equation}
\label{tuti}
 a_m^\ell \leq 
\left
\{\begin{array}{cc}
\displaystyle{\frac {r}{(rB^{-\ell}+\Sigma_{k=2}^m r_k)}}, & \mbox{     if  } \,\,\ell\geq 1, \\\\
\displaystyle{\frac {r}{(rB^{-\ell}+\ell\Sigma_{k=2}^m r_k)}}, &\mbox{if }\,\,0<\ell\leq  1,
\end{array}
\right.
\end{equation}
from which the inequality \eref{tuti12} easily follows.

\noindent
We prove \eref{tuti}  by induction.

\noindent
{\bf Case 1: $\ell\geq 1$.}

If $a_2=0$, then all $a_m=0$, $m=3,4,\ldots$, and the lemma is true. Let us assume that $a_2>0$, and therefore  $a_1>0$.
It follows from the recursive relation and  \eref{ppp} that for $\ell\geq 1$ 
$$
a_{2} ^{-\ell}\geq a_1^{-\ell}(1-\frac{r_{2}}{r}a_1^\ell)^{-\ell}\geq a_1^{-\ell}(1-\frac{r_{2}}{r}a_1^\ell)^{-1} \geq a_1^{-\ell}(1+\frac{r_{2}}{r}a_1^\ell)=a_1^{-\ell}+\frac {r_{2}} r\geq B^{-\ell}+\frac {r_{2}} r.
$$ 
This gives
\eref{tuti} for $m=2$.
 
We now assume that \eref{tuti} is true for $m$ and  prove it's validity  for $m+1$.
 As in  the case $m=2$, we may assume that  $a_{m+1}>0$. Because of the recursive relation, this also means that $a_m>0$ and using \eref{ppp},
 we derive
\begin{equation}
\label{po}
a_{m+1} ^{-\ell}\geq a_m^{-\ell}(1-\frac{r_{m+1}}{r}a_m^\ell)^{-\ell}  
 \geq 
a_m^{-\ell}(1+\frac{r_{m+1}}{r}a_m^\ell)=a_m^{-\ell}+\frac {r_{m+1}} r. 
\end{equation}
Now, from  the induction hypothesis we have that
$$
a_m^{-\ell}\geq \frac {rB^{-\ell}+ \Sigma_{k=2}^m r_k}r,
$$
which combined with \eref{po} proves the lemma in the case $\ell\geq 1$.

\noindent
{\bf Case 2: $0<\ell < 1$.} 

Again, we need only consider the case when $a_2>0$. We will use the fact that for  $0<\ell  < 1$, the function $(1-t)^\ell$ is concave.  Therefore,  we have
\begin{equation}
\label{ola}
(1-t)^\ell\leq 1-\ell t,\quad 0\le t\le 1.
\end{equation}
We apply this inequality with  $t=\frac{r_{2}}{r}a_1^\ell\in [0,1]$
and obtain
\begin{eqnarray}
\nonumber
a_{2} ^{-\ell}&\geq& a_1^{-\ell}(1-\frac{r_{2}}{r}a_1^\ell)^{-\ell}\geq a_1^{-\ell}(1-\ell\frac{r_{2}}{r}a_1^\ell)^{-1} \geq a_1^{-\ell}(1+\ell\frac{r_{2}}{r}a_1^\ell)\\ \nonumber
&=&a_1^{-\ell}+\ell\frac {r_{2}} r\geq B^{-\ell}+\ell\frac {r_{2}} r,
\end{eqnarray}
which gives  \eref{tuti} for $m=2$. Next , we assume that \eref{tuti} is true for $m$ and   prove it for $m+1$.
We can assume $a_{m+1}>0$  and therefore $a_m>0$. From the recursive relation and \eref{ola} 
with $t=\frac{r_{m+1}}{r}a_m^\ell\in [0,1]$, we have
\begin{eqnarray}
\nonumber
a_{m+1} ^{-\ell}&\geq& a_m^{-\ell}(1-\frac{r_{m+1}}{r}a_m^\ell)^{-\ell} \geq a_m^{-\ell}(1-\ell\frac{r_{m+1}}{r}a_m^\ell)^{-1}\\ \nonumber
&\geq& 
a_m^{-\ell}(1+\ell\frac{r_{m+1}}{r}a_m^\ell)=a_m^{-\ell}+\ell\frac {r_{m+1}} r. 
\end{eqnarray}
This inequality, combined with the induction hypothesis gives that
$$
a_{m+1}^{-\ell}\geq \frac {rB^{-\ell}+ \ell\Sigma_{k=2}^{m+1} r_k}r,
$$
and the proof is complete.
\end{proof}

 \subsection{Convergence rates for OMP(co)}
 \label {MROMP}
 
 \noindent

 In this section, we analyze the performance of the OMP(co) algorithm when applied to the minimization problem (\ref{Eq})
 with $D=H$. 
 We assume that  the dictionary $\mathcal D$ is an orthonormal system $\{\varphi_i\}_{i=1}^\infty$ and $E$ takes on
 its global minimum $\bar x$.  This means that this global minimum is assumed over $\Omega:=\{x: \ E(x)\le E(0)\} $. 
Let us denote by $e_k$ the error of the algorithm at Step k, namely,
$$
e_k:=E(x_k)-E(\bar x).
$$
The next lemma provides a recursive relation for the sequence $\{e_k\}_{k=1}^\infty$. 
\begin{lemma}
\label {L1}
Let the  objective function $E$ satisfy {\bf Conditions 0}, {\bf 1}, and {\bf 2}, and $\mu$ be a  constant such that 
$\mu>\max\{1,M_0\alpha^{-1}M^{1-q}\}$.  Let problem   {\rm (\ref{Eq}) } have a solution  
$\bar x=\sum_{i}c_i(\bar x)\varphi_i\in \Omega$ with support 
$\bar S:=\{i:\, c_i(\bar x)\neq 0\}<\infty$,
where $\{\varphi_i\}$ is an orthonormal basis.
Then,    the error of the OMP(co)  applied  to $E$   and $\{\varphi_i\}$ satisfies the following recursive inequalities:
\begin{equation}
\label {basic0}
e_1 \leq E(0)-E(\bar x),
\end{equation}
and  
\begin{equation}
\label {basic1}
e_k \leq e_{k-1} -\frac{(\mu-1)\mu^{-q/(q-1)}}{r}e_{k-1}^{\frac{(p-1)q}{(q-1)p}},\quad k\ge 2,
\end{equation}
where the constant $r$ is 
$$
r=|\bar S|^\frac{q}{2(q-1)}\alpha^{\frac{1}{q-1}}\left (p\be_0^{1/p}(p-1)^{(1-p)/p}\right )^{-q/(q-1)}.
$$
\end{lemma}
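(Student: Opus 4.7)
The plan is to prove the two inequalities separately. The bound $e_1 \le E(0) - E(\bar x)$ is immediate: since $x_1$ minimizes $E$ on $V_1 = \text{span}\{\varphi_{j_1}\}$, which contains $0$, we have $E(x_1) \le E(0)$, and subtracting $E(\bar x)$ yields the claim. All the work lies in the recursive inequality \eref{basic1}.

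For the recursive step, I would combine an upper bound (from \textbf{Condition 1}) on how much $E$ must decrease between $x_{k-1}$ and $x_k$ with a lower bound (from \textbf{Condition 2}) on the greedy gradient $g_k := |\langle E'(x_{k-1}),\varphi_{j_m}\rangle|$. For the upper bound, note that $x_{k-1}+\lambda \varphi_{j_k} \in V_k$ for every $\lambda \in \R$, so the minimization defining $x_k$ gives $E(x_k) \le E(x_{k-1}+\lambda \varphi_{j_k})$. Applying \textbf{Condition 1} (valid whenever $|\lambda|\le M$) and choosing the sign of $\lambda$ so that $\lambda \langle E'(x_{k-1}),\varphi_{j_k}\rangle = -|\lambda|g_k$, I obtain
\[
E(x_k) \le E(x_{k-1}) - |\lambda| g_k + \al |\lambda|^q.
\]
Setting $|\lambda| = (g_k/(\al\mu))^{1/(q-1)}$ (the parametric minimizer) and simplifying gives
\[
E(x_k)-E(x_{k-1}) \le -\frac{(\mu-1)\mu^{-q/(q-1)}}{\al^{1/(q-1)}}\,g_k^{q/(q-1)}.
\]
Here the hypothesis $\mu>M_0\al^{-1}M^{1-q}$ is exactly what is needed to guarantee $|\lambda|\le M$, since \textbf{Condition 0} gives $g_k \le \|E'(x_{k-1})\|\le M_0$.

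For the lower bound on $g_k$, I would invoke Claim 1 to extend \textbf{Condition 2} to all $x,x'\in\Omega$ with constant $\be_0$, then apply it at $x=x_{k-1}$, $x'=\bar x$ to get
\[
\langle E'(x_{k-1}),x_{k-1}-\bar x\rangle \ge e_{k-1}+\be_0\|x_{k-1}-\bar x\|^p.
\]
Now Lemma~\ref{lm0} forces $\langle E'(x_{k-1}),\varphi\rangle=0$ for every $\varphi\in V_{k-1}$; writing $x_{k-1}-\bar x$ in the orthonormal basis and using that $c_i(x_{k-1})=0$ outside $\{j_1,\dots,j_{k-1}\}$, the inner product reduces to a sum over coordinates in $\bar S\setminus\{j_1,\dots,j_{k-1}\}$, a set of size at most $|\bar S|$. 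Cauchy--Schwarz then yields
\[
|\langle E'(x_{k-1}),x_{k-1}-\bar x\rangle| \le |\bar S|^{1/2}\,\|x_{k-1}-\bar x\|\,g_k.
\]
Combining these two bounds and writing $t:=\|x_{k-1}-\bar x\|$ gives $g_k \ge (e_{k-1}+\be_0 t^p)/(|\bar S|^{1/2}t)$; minimizing the right-hand side in $t$ (critical point $t^p=e_{k-1}/(\be_0(p-1))$) yields
\[
g_k \ge \frac{p\,\be_0^{1/p}(p-1)^{-(p-1)/p}}{|\bar S|^{1/2}}\,e_{k-1}^{(p-1)/p}.
\]
Raising to the power $q/(q-1)$ and inserting into the smoothness bound produces exactly $e_k \le e_{k-1} - (\mu-1)\mu^{-q/(q-1)} r^{-1} e_{k-1}^{(p-1)q/((q-1)p)}$ with $r$ as stated.

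The routine but finicky part is matching the multiplicative constants to the stated form of $r$, since we carry two separate optimizations (over $\lambda$ and over $t$) and then raise by $q/(q-1)$. The conceptual hurdle is the gradient-to-error bound: recognizing that Lemma~\ref{lm0} collapses the inner product onto the ``missing'' basis indices in $\bar S$, so that the sparsity $|\bar S|$ (rather than the full dimension) controls the loss in Cauchy--Schwarz. This is what allows a dimension-free convergence rate and is the crux of the argument.
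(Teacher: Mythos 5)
Your proposal is correct and follows essentially the same route as the paper's proof: the $\mu$-parametrized step in the direction $\varphi_{j_k}$ via \textbf{Condition 1}, and the gradient lower bound via Claim 1, Lemma \ref{lm0}, and Cauchy--Schwarz over $\bar S\setminus\{j_1,\dots,j_{k-1}\}$ (your calculus minimization over $t$ is equivalent to the paper's weighted AM--GM step and yields the same constant $C=p\be_0^{1/p}(p-1)^{(1-p)/p}$). The only cosmetic slip is calling $|\lambda|=(g_k/(\al\mu))^{1/(q-1)}$ ``the parametric minimizer'' --- the unconstrained minimizer corresponds to $\mu=q$, and $\mu$ is chosen larger precisely to enforce $|\lambda|\le M$ --- but your computation is exactly the paper's.
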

\noindent
\begin{proof}
Clearly, we have  $e_1=E(x_1)-E(\bar x)\leq E(0)-E(\bar x)$
since 
$${\displaystyle x_1:=\displaystyle {\argmin_{x\in span\{\varphi_{j_1}\}}  E(x)}}.$$
Next, 
we consider  Step k, $k=2,3,\ldots$ of the algorithm.
Observe that if at Step (k-1) we have that $ \bar S \subseteq\{j_1, \ldots, j_{k-1}\}$, then $x_{k-1}=\bar x$, $E'(x_{k-1})=0$ and the 
OMP(co) would have stopped with output $x_{k-1}=\bar x$. If the algorithm  has  not stopped, then
 it generates the next output $x_k$ and $\varphi_{j_k}$. Since $x_k$ is the point of  minimum of $E$ over $ span\{\varphi_{j_1}, \varphi_{j_2},\dots,\varphi_{j_k}\}$, we have for any $|t|\le M$,
\begin{equation}
\label{q}
E(x_k) \leq E(x_{k-1}+t \varphi_{j_k})\leq E(x_{k-1})  +t\ \lan E'(x_{k-1}),\varphi_{j_k}\ran+\al |t|^q,
\end{equation}
where the last inequality invoked 
 {\bf Condition 1}.   We take 
 $$t=-\left(\alpha \mu\right)^{-\frac{1}{q-1}} {\rm sign}(\lan E'(x_{k-1}),\varphi_{j_k}\ran) |\lan E'(x_{k-1}),\varphi_{j_k}\ran|^{\frac{1}{q-1}}.
 $$
 Because of the definition of $\mu$ in the statement of the theorem, we have $|t|\le M$.  Therefore,
 we have
\begin{equation}
\label{q11}
E(x_k) \leq E(x_{k-1}) -\frac{\mu-1}{\mu}\left(\alpha \mu\right)^{-\frac{1}{q-1}}\abs{\lan E'(x_{k-1}),\varphi_{j_k}\ran}^{q/(q-1)}.
\end{equation}

Now,  we will find a lower bound for $\abs{\lan E'(x_{k-1}),\varphi_{j_k}\ran}$.
First, note that from {\bf Condition 2} and Claim 1 applied to $x'=\bar x$ and $x=x_{k-1}$ (both are in $\Omega$), we obtain 
\begin{equation}
\label{pp}
\lan E'(x_{k-1}), x_{k-1}-\bar x \ran \geq E(x_{k-1})-E(\bar x)+\be_0 \norm{\bar x-x_{k-1}}^p.
\end{equation}
Let us recall the weighted arithmetic mean -geometric mean inequality
$$
\frac{p_1}{p_1+p_2}a+\frac{p_2}{p_1+p_2}b\geq a^\frac{p_1}{p_1+p_2}b^\frac{p_2}{p_1+p_2},\quad  \hbox{where}\quad a,b\geq 0, \quad p_1,p_2>0,
$$
and apply it for $p_1=p-1$, $p_2=1$, $a=\frac{E(x_{k-1})-E(\bar x)}{p-1}$, $b=\be_0 \norm{\bar x-x_{k-1}}^p$.
We have
 \begin{eqnarray}
 \nonumber
E(x_{k-1})-E(\bar x)+\be_0 \norm{\bar x-x_{k-1}}^p=
p\left (\frac {(p-1)}{p}\frac{E(x_{k-1})-E(\bar x)}{p-1}+ \frac{1}{p}\be_0 \norm{\bar x-x_{k-1}}^p\right ),
\end{eqnarray}
and therefore
\begin{eqnarray}
 \nonumber
E(x_{k-1})-E(\bar x)+\be_0 \norm{\bar x-x_{k-1}}^p
\geq C\norm{\bar x-x_{k-1}}\left(E(x_{k-1})-E(\bar x)\right)^{(p-1)/p}, 
\end{eqnarray}
with $C=p\be_0^{1/p}(p-1)^{(1-p)/p}$. 
We combine this inequality with \eref{pp} to obtain
\begin{equation}
\label{ppnew}
\lan E'(x_{k-1}), x_{k-1}-\bar x \ran \geq C\norm{\bar x-x_{k-1}}\left(E(x_{k-1})-E(\bar x)\right)^{(p-1)/p}.
\end{equation}

From the definition of $x_{k-1}$ and Lemma \ref{lm0}, it follows  that 
$$\lan E'(x_{k-1}), \varphi_i\ran=0, \quad i=j_1,\dots,j_{k-1}.$$
 Therefore, if we write 
 $$
 x_{k-1}-\bar x=\sum_{i}c_i(x_{k-1}-\bar x)\varphi_i,
 $$
since the support of $x_{k-1}$ is $\{j_1,\ldots,j_{k-1}\}$, 
 we obtain
\begin{eqnarray}
\label{ta}
\nonumber
 \lan E'(x_{k-1}), x_{k-1}-\bar x \ran &=&  \sum_{i \in   \bar S \setminus \{ j_1,\ldots, j_{k-1}\}} c_i(x_{k-1}-\bar x)\lan E'(x_{k-1}), \varphi_i \ran,\nonumber\\
  &\leq& \sum_{i \in   \bar S \setminus \{j_1,\ldots, j_{k-1}\}}\abs{c_i(x_{k-1}-\bar x)} \abs{\lan E'(x_{k-1}),\varphi_{j_k}\ran}\\
\nonumber
&\leq &\abs{\lan E'(x_{k-1}),\varphi_{j_k}\ran} |\bar S |^{1/2}\|x_{k-1}-\bar x\|. 
\nonumber
 \end{eqnarray}
 We combine this  inequality with   \eref{ppnew} and derive that 
\begin{equation}
\nonumber
\abs{\lan E'(x_{k-1}),\varphi_{j_k}\ran} \|\bar x-x_{k-1}\||\bar S|^{1/2}\geq C\norm{\bar x-x_{k-1}}\left(E(x_{k-1})-E(\bar x)\right)^{(p-1)/p}.
\end{equation}
Therefore we have the desired lower bound
$$
\abs{\lan E'(x_{k-1}),\varphi_{j_k}\ran} \geq C|\bar S|^{-1/2}\left(E(x_{k-1})-E(\bar x)\right)^{(p-1)/p}.
$$
The latter  result and  \eref{q11} gives the estimate
$$
E(x_k) \leq E(x_{k-1}) -\frac{(\mu-1)C^{q/(q-1)}}{\mu^{q/(q-1)}\alpha^{1/(q-1)}|\bar S|^{\frac{q}{2(q-1)}}}\left(E(x_{k-1})-E(\bar x)\right)^{\frac{(p-1)q}{(q-1)p}}.
$$
Subtracting $E(\bar x)$ from both sides of this inequality results in \eref{basic1} and the proof is completed.
\end{proof}

We next remark that we can  take a specific value for $\mu$ in the last lemma.
\begin{rem}
\label{const}
Let the  objective function $E$ satisfy {\bf Conditions 0}, {\bf 1}, and {\bf 2}.  Let problem   {\rm (\ref{Eq}) } have a solution  
$\bar x=\sum_{i}c_i(\bar x)\varphi_i\in \Omega$ with support 
$\bar S:=\{i:\, c_i(\bar x)\neq 0\}<\infty$,
where $\{\varphi_i\}$ is an orthonormal basis.
Then,    the error of the OMP(co) applied  to $E$   and $\{\varphi_i\}$ satisfies the following recursive inequalities:
$$
e_1 \leq E(0)-E(\bar x),
$$
and  
\begin{equation}
\label {basicrem}
e_k \leq e_{k-1} -\frac{C_3}{r}e_{k-1}^{\frac{(p-1)q}{(q-1)p}}=e_{k-1}[1 -\frac{C_3}{r}e_{k-1}^{\frac{p-q}{(q-1)p}}],\quad k\ge 2,
\end{equation}
where $r$ is the constant from Lemma {\rm \ref{L1}}
and $C_3=C_3(M_0,M, \alpha,q)$ is 
\begin{equation}
\label{c3}
C_3=
\left
\{\begin{array}{cc}
\displaystyle{(q-1)q^{-q/(q-1)}}, & \mbox{     if  } \,\,M_0M^{1-q}\alpha^{-1}< q, \\\\
\displaystyle{ (M_0M^{1-q}\alpha^{-1}-1)M_0^{-q/(q-1)}M^{-q}\alpha^{q/(q-1)}}, &\mbox{if }\,\,M_0M^{1-q}\alpha^{-1}\geq q.
\end{array}
\right.
\end{equation}
\end{rem}
\begin{proof}
The estimate follows from Lemma {\rm \ref{L1}} and the fact that the function 
$$g(\mu)=(\mu-1)\mu^{-q/(q-1)}$$
 is increasing on $(1,q)$ and 
decreasing on $(q,\infty)$ with global maximum at $\mu=q$.
\end{proof}

The next theorem is our  main result about the OMP(co) algorithm.
 \begin{theorem}
\label{Tm0}
Let the  objective function $E$ satisfy {\bf Conditions 0} ,{\bf 1}, and {\bf 2}.  Let  problem 
{\rm (\ref{Eq})} with $D=\Omega:=\{x:\ E(x)\le E(0)\}$ have a solution  
$\bar x=\sum_{i}c_i(\bar x)\varphi_i\in \Omega$ with support  $\bar S:=\{i:\, c_i(\bar x)\neq 0\}<\infty,$ where 
$\{\varphi_i\}$ is an orthonormal basis for $H$.
Then, at Step k, the OMP(co) applied to $E$ and $\{\varphi_i\}$ outputs $x_k$, where either $x_k=\bar x$, in which case    $e_k=0$, or:

\noindent
{\rm (i)} When  $p\neq q$, for $k=2,3,\ldots,$

$$
e_k\leq C|\bar S|^{\frac{pq}{2(p-q)}}k^{-\frac{p(q-1)}{p-q}},
$$
$$
 \|x_k-\bar x\|\leq C'|\bar S|^{\frac{q}{2(p-q)}}k^{-\frac{q-1}{p-q}}
 $$
where $C$ and $C'$ depend only on $p,q,\alpha,\beta,E$.

 \noindent
 {\rm(ii)} When $p=q=2$, we have the exponential decay
$$
e_k\leq C_2\gamma ^{k-1},
$$
$$
\|x_k-\bar x\|\leq C_2^{\frac{1}{2}}\beta_0 ^{-\frac{1}{2}}\gamma^{(k-1)/2},
\quad k=2,3,\ldots,
$$
where     $ \gamma:=  1-\frac{\tilde C_3}{|\bar S|}$ is in $(0,1)$,   $C_2=E(0)-E(\bar x)$, and $\tilde C_3$ is a constant 
that depends on $\alpha$,  $\beta$,  and $E$.
 \end{theorem}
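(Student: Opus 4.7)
The plan is to combine the recursive inequality from Remark \ref{const} with the abstract sequence Lemma \ref{lmseq}, and then transfer the bound on $e_k$ to a bound on $\|x_k-\bar x\|$ using \textbf{Condition 2} at the optimum, where $E'(\bar x)=0$ by Lemma \ref{FT}. Rewrite the inequality in Remark \ref{const} as
$$e_k \leq e_{k-1}\bigl(1-\tfrac{C_3}{r}\,e_{k-1}^{\ell}\bigr),\qquad \ell:=\tfrac{p-q}{(q-1)p},$$
with $B:=E(0)-E(\bar x)$ and $r_{k+1}:=C_3$ for every $k\ge 1$. Since $1<q\leq 2\leq p$ we have $\ell\geq 0$, with $\ell=0$ precisely when $p=q=2$.

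For part (i), where $p\neq q$ and hence $\ell>0$, Lemma \ref{lmseq} applied to the sequence $\{e_m\}$ gives
$$e_m \leq \max\{1,\ell^{-1/\ell}\}\, r^{1/\ell}\bigl(rB^{-\ell}+(m-1)C_3\bigr)^{-1/\ell}.$$
The exponent satisfies $1/\ell=p(q-1)/(p-q)$. The dependence on $|\bar S|$ all comes through $r$: since $r=|\bar S|^{q/(2(q-1))}\cdot K(p,q,\al,\be_0)$, one has $r^{1/\ell}=|\bar S|^{pq/(2(p-q))}\cdot K(p,q,\al,\be_0)^{1/\ell}$. Dropping the $rB^{-\ell}$ term in the denominator and absorbing all constants into $C=C(p,q,\al,\be,E)$ yields
$$e_k\leq C\,|\bar S|^{\frac{pq}{2(p-q)}}\,k^{-\frac{p(q-1)}{p-q}},$$
which is the first claim. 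For the bound on $\|x_k-\bar x\|$, apply \textbf{Condition 2} together with Claim~1 to the pair $x=\bar x,\,x'=x_k$ (both in $\Omega$); since $E'(\bar x)=0$, this reduces to $e_k\geq \be_0\|x_k-\bar x\|^p$, so $\|x_k-\bar x\|\leq (e_k/\be_0)^{1/p}$. Substituting the bound just established produces the asserted rate with exponent $q/(2(p-q))$ on $|\bar S|$ and $(q-1)/(p-q)$ on $k$, with $C'=(C/\be_0)^{1/p}$.

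For part (ii), the exponent $\ell$ degenerates to $0$, and the recursion becomes geometric:
$$e_k\leq e_{k-1}\bigl(1-\tfrac{C_3}{r}\bigr).$$
Setting $p=q=2$ in the formula for $r$ gives $r=\al|\bar S|/(4\be_0)$, hence $C_3/r=\tilde C_3/|\bar S|$ with $\tilde C_3=4\be_0 C_3/\al$, and one obtains $e_k\leq C_2\gamma^{k-1}$ for $\gamma=1-\tilde C_3/|\bar S|\in(0,1)$ (positivity of $\gamma$ follows from the admissibility constraint $C_3/r\leq 1$ built into Lemma \ref{lmseq}, while $\gamma<1$ is immediate). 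Combining with the inequality $\|x_k-\bar x\|^2\leq e_k/\be_0$ gives the second norm estimate.

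The calculation is largely bookkeeping; the only delicate point is ensuring that \textbf{Condition 2} may legitimately be invoked at the unrestricted pair $(\bar x,x_k)$, which is exactly what Claim~1 provides once one notes that both $x_k$ and $\bar x$ lie in $\Omega$ (the former because $\{E(x_k)\}$ is nonincreasing, the latter by definition of the minimizer). After that, the rest is matching exponents.
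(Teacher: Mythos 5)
Your proposal is correct and follows essentially the same route as the paper: feed the recursion of Remark \ref{const} into Lemma \ref{lmseq} with $\ell=\frac{p-q}{p(q-1)}$, $r_k=C_3$, $B=E(0)-E(\bar x)$ for part (i), treat $p=q=2$ as the degenerate geometric case for part (ii), and convert the bound on $e_k$ into one on $\|x_k-\bar x\|$ via \textbf{Condition 2} at $x=\bar x$ with $E'(\bar x)=0$ from Lemma \ref{FT}. Your explicit appeal to Claim 1 to justify applying \textbf{Condition 2} to the possibly far-apart pair $(\bar x,x_k)$ is in fact slightly more careful than the paper's wording, and your computed $\tilde C_3=4\beta_0 C_3/\alpha$ agrees with the paper's formula.
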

\begin{proof}
In the case $p\neq q$, we define the sequence of non-negative numbers
$$
r_k=C_3, \quad a_k=E(x_k)-E(\bar x), \quad k=1,2,\ldots,
$$
and the numbers
$$
r=|\bar S|^\frac{q}{2(q-1)}\alpha^{\frac{1}{q-1}}\left (p\be^{1/p}(p-1)^{(1-p)/p}\right )^{-q/(q-1)}>0, 
$$
$$
\ell=\frac{p-q}{p(q-1)}>0, \quad B=E(0)-E(\bar x)> 0.
$$
It follows from Remark \ref{const} that the above defined sequences satisfy the conditions of Lemma \ref{lmseq}, and therefore
we have
\begin{equation}
\label{ekest}
e_k=E(x_k)-E(\bar x)\leq C_0\left (\frac{|\bar S|^{\frac{q}{2(q-1)}}}{C_1|\bar S|^{\frac{q}{2(q-1)}}+C_3(k-1)}\right )^{\frac{p(q-1)}{p-q}},
\end{equation}
where
$$
C_0=C_0(p,q,\alpha,\beta)=\alpha^{\frac{p}{p-q}}\left (p\be^{1/p}(p-1)^{(1-p)/p}\right )^{-\frac{pq}{p-q}}\cdot\max\left\{1,\left (\frac{p(q-1)}{p-q}\right)^{\frac{p(q-1)}{p-q}}\right \}, 
$$ and 
$$
C_1=C_1(p,q,\alpha,\beta,E)=\alpha^{\frac{1}{q-1}}\left (p\be^{1/p}(p-1)^{(1-p)/p}\right )^{-q/(q-1)}
(E(0)-E(\bar x))^
{\frac{q-p}{p(q-1)}}.
$$
One easily derives the estimate for $e_k$ in (i) from \eref{ekest}.  The estimate for $\|x_k-\bar x\|$ in (i)
now follows from  {\bf Condition 2} with $x'=x_k$, $x=\bar x$ and   Lemma \ref{FT} .

In the case $p=q=2$, as before  $E(x_1)-E(\bar x)\leq E(0)-E(\bar x)$,  and Lemma \ref{L1}  and Remark \ref{const}
give that
$$
E(x_k)-E(\bar x)\leq \left (1-\frac{\tilde C_3}{|\bar S|}\right )(E(x_{k-1})-E(\bar x)), \quad k=2,3,\ldots,
$$
where
\begin{equation}
\label{c31}
\tilde C_3=
\left
\{\begin{array}{cc}
\displaystyle{\frac{\beta_0}{\alpha}}, & \mbox{     if  } \,\,M_0M^{-1}\alpha^{-1}< 2, \\\\
\displaystyle{ 4\beta_0(M_0M^{-1}\alpha^{-1}-1)M_0^{-2}M^{-2}\alpha}, &\mbox{if }\,\,M_0M^{-1}\alpha^{-1}\geq 2.
\end{array}
\right.
\end{equation}
It follows  that
$$
E(x_k)-E(\bar x)\leq (E(0)-E(\bar x))\left (1-\frac{\tilde C_3}{|\bar S| }\right )^{k-1}, \quad k=2,3,\ldots.
$$
As in the previous case, we use {\bf Condition 2} with $x'=x_k$, $x=\bar x$ and  Lemma \ref{FT} to derive the estimate for $\|x_k-\bar x\|$.
\end{proof}

 \subsection{Main results for WCGA(co)}
 \label {wMROMP}
 The convergence analysis of the WCGA(co) is almost the same as that for  the OMP(co). We omit the details here and just state the error 
 estimates for 
 $$
e^{\rm w}_k:=E(x^{\rm w}_k)-E(\bar x),
$$
pointing out the main differences in the proof.
 \begin{theorem}
 \label{wlm0}
Let the  objective function $E$ satisfy {\bf Conditions 0}, {\bf 1}, and {\bf 2}. Let  problem {\rm (\ref{Eq})} with $D=\Omega=\{x:\ E(x)\le E(0)\}$ have a solution  
$\bar x=\sum_{i}c_i(\bar x)\varphi_i\in \Omega$ with support  $\bar S:=\{i:\, c_i(\bar x)\neq 0\}<\infty$, where 
$\{\varphi_i\}$ is an orthonormal basis. 
Then, at Step k, the WCGA applied to $E$ and $\{\varphi_i\}$ outputs $x^{\rm w}_k$, where either $x^{\rm w}_k=\bar x$,
in which case $e^{\rm w}_k=0$,  or:

\vskip .1in
\noindent
{\rm (i)} When  $p\neq q $, for  each $k=2,3,\ldots$, we have
$$
e^{\rm w}_k\leq \tilde C|\bar S|^{\frac{pq}{2(p-q)}}\left (\sum_{j=2}^kt_j^{\frac{q}{q-1}}\right )^{\frac{p(q-1)}{p-q}}
$$
$$
 \|x^{\rm w}_k-\bar x\|\leq \tilde C'|\bar S|^{\frac{q}{2(p-q)}}\left (\sum_{j=2}^kt_j^{\frac{q}{q-1}}\right )^{\frac{(q-1)}{p-q}}
 $$
where
$\tilde C$ and $\tilde C'$ depend only on $p$, $q$, $\alpha$, $\beta$, $E$.

\vskip .1in
\noindent
{\rm (ii)}  When $p=q=2$, we have
$$
e_k^{\rm w}\leq C_2\prod_{j=2}^k\left (1-\frac{\tilde C_3}{|\bar S|}t_j^2\right ),
$$
$$
\|x_k^{\rm w}-\bar x\|\leq C_2^{\frac{1}{2}}\beta^{-\frac{1}{2}}\prod_{j=2}^k\left (1-\frac{\tilde C_3}{|\bar S|}t_j^2\right )^{1/2},
$$
with $C_2=E(0)-E(\bar x)$ and $\tilde C_3$ depends on $\alpha$, $\beta$, and $E$.
 \end{theorem}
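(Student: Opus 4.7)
The plan is to adapt the proof of Theorem~\ref{Tm0} by tracking the weakness parameter $t_k$ throughout the chain of inequalities in Lemma~\ref{L1}. The key observation is that the only place in that proof relying on the OMP(co) selection rule is the lower bound on $|\langle E'(x_{k-1}),\varphi_{j_k}\rangle|$. The Cauchy--Schwarz step, combined with the orthogonality relation $\langle E'(x_{k-1}^{\rm w}),\varphi_i\rangle = 0$ for $i \in \{j_1,\ldots,j_{k-1}\}$ (Lemma~\ref{lm0}) and \textbf{Condition 2} via Claim 1, applied exactly as in Lemma~\ref{L1}, still yields
\begin{equation*}
\sup_{\varphi \in \mathcal D}|\langle E'(x_{k-1}^{\rm w}), \varphi\rangle| \geq C|\bar S|^{-1/2}(e_{k-1}^{\rm w})^{(p-1)/p},
\end{equation*}
with $C = p\beta_0^{1/p}(p-1)^{(1-p)/p}$. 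The WCGA(co) selection rule then gives
\begin{equation*}
|\langle E'(x_{k-1}^{\rm w}), \varphi_{j_k}\rangle| \geq t_k \cdot C|\bar S|^{-1/2}(e_{k-1}^{\rm w})^{(p-1)/p}.
\end{equation*}

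Substituting this weaker lower bound into the smoothness estimate $E(x_k^{\rm w}) \leq E(x_{k-1}^{\rm w}) + t\langle E'(x_{k-1}^{\rm w}),\varphi_{j_k}\rangle + \alpha |t|^q$ and optimizing over $t$ exactly as in the derivation of \eref{q11} and \eref{basicrem}, I obtain the WCGA(co) analogue of the recursive inequality:
\begin{equation*}
e_k^{\rm w} \leq e_{k-1}^{\rm w} - \frac{C_3 t_k^{q/(q-1)}}{r}(e_{k-1}^{\rm w})^{(p-1)q/((q-1)p)}, \quad k \geq 2,
\end{equation*}
where $r$ and $C_3$ are the constants from Lemma~\ref{L1} and Remark~\ref{const}. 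The single factor $t_k^{q/(q-1)}$ is the only new element, arising from raising the weaker gradient bound to the power $q/(q-1)$.

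For part (i) with $p \neq q$, I would apply Lemma~\ref{lmseq} with $a_k = e_k^{\rm w}$, $r_k = C_3 t_k^{q/(q-1)}$, $\ell = (p-q)/(p(q-1)) > 0$, and $B = E(0) - E(\bar x)$, which produces the claimed bound for $e_k^{\rm w}$ in terms of the weighted sum $\sum_{j=2}^k t_j^{q/(q-1)}$. The norm estimate follows by applying \textbf{Condition 2} with $x'=x_k^{\rm w}$, $x=\bar x$, using $E'(\bar x) = 0$ from Lemma~\ref{FT}, so that $e_k^{\rm w} \geq \beta_0\|x_k^{\rm w} - \bar x\|^p$. For part (ii) with $p=q=2$, the recursive inequality simplifies to
\begin{equation*}
e_k^{\rm w} \leq \left(1 - \frac{\tilde C_3}{|\bar S|} t_k^2\right) e_{k-1}^{\rm w},
\end{equation*}
which telescopes directly into the product formula; the norm bound follows identically. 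The main obstacle is essentially bookkeeping: one must verify that the optimal step $t$ from Lemma~\ref{L1} remains admissible (i.e., $|t| \leq M$) when the gradient component is scaled by $t_k \in (0,1]$. Since scaling the gradient down only decreases the magnitude of the optimal $t$, this imposes no new restriction, and Remark~\ref{const} supplies the same $C_3$ uniformly across all $k$.
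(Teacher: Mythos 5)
Your proposal is correct and follows essentially the same route as the paper: the paper's own (sketched) proof likewise reduces everything to inserting the weakness parameter into the Cauchy--Schwarz/orthogonality estimate (its inequality \eref{wta}, which bounds each $|\langle E'(x^{\rm w}_{k-1}),\varphi_i\rangle|$ by $t_k^{-1}|\langle E'(x^{\rm w}_{k-1}),\varphi_{j_k}\rangle|$ --- equivalent to your step through the supremum) and then invoking Lemma~\ref{lmseq} with $r_k=C_3 t_k^{q/(q-1)}$, exactly as you do. Your additional check that the admissibility constraint $|t|\le M$ is unaffected (since $|\langle E'(x^{\rm w}_{k-1}),\varphi_{j_k}\rangle|\le M_0$ still holds by \textbf{Condition 0}) is a detail the paper leaves implicit.
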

\begin{proof} 
The proof follows the lines of that of Theorem \ref{Tm0} and the corresponding lemmas. The difference is that instead of estimate \eref{ta}, we have
\begin{eqnarray}
\nonumber
 \lan E'(x^{\rm w}_{k-1}), x^{\rm w}_{k-1}-\bar x \ran &=& \Sigma_{i \in \bar S \setminus j_1,\ldots, j_{k-1}} c_i(x^{\rm w}_{k-1}-\bar x)\lan E'(x^{\rm w}_{k-1}), \varphi_i \ran\\
\nonumber
& \leq& \Sigma_{i \in \bar S \setminus j_1,\ldots, j_{k-1}}\abs{c_i(x^{\rm w}_{k-1}-\bar x)} \abs{\lan E'(x^{\rm w}_{k-1}),\varphi_i\ran}\\
\nonumber
&\leq &t_k^{-1}\abs {\lan E'(x^{\rm w}_{k-1}),\varphi_{j_k}\ran} \Sigma_{i \in \bar S}\abs{c_i(x^{\rm w}_{k-1}-\bar x)}\\
&\leq&t_k^{-1}\abs {\lan E'(x^{\rm w}_{k-1}),\varphi_{j_k}\ran}\|\bar x-x^{\rm w}_{k-1}\||\bar S|^{1/2},
\label{wta}
 \end{eqnarray}
 and that we use Lemma \ref{lmseq} with 
$r_k=C_3{t_k}^{\frac{q}{q-1}}$, $k=2,3,\ldots$.
\end{proof}

 \vskip .1in
  
  \noindent
 Hao Nguyen\\ 
 Department of Mathematics, Texas A\&M University,
College Station, TX 77843, USA\\
htnguyen@math.tamu.edu 
  
    \vskip .1in
 \noindent
Guergana Petrova\\
Department of Mathematics, Texas A\&M University,
College Station, TX 77843, USA\\
  gpetrova@math.tamu.edu 
\end{document}